\theoremstyle{plain}
\newtheorem{theorem}{Theorem}[section]
\newtheorem{proposition}[theorem]{Proposition}
\newtheorem{lemma}[theorem]{Lemma}
\theoremstyle{definition}
\theoremstyle{remark}
\renewcommand{\thefootnote}{\arabic{footnote}}
\def\R{\mathbb R}
\def\N{\mathbb N}
\def\al{\alpha}
\def\om{\omega}
\def\Om{\Omega}
\def\be{\beta}
\def\ga{\gamma}
\def\si{\sigma}
\def\lam{\lambda}
\def\vphi{\varphi}
\def\ep{\epsilon}
\def\na{\nabla}
\def\pa{\partial}
\def\la{\langle} 
\def\ra{\rangle} 
\def\lt{\left}
\def\rt{\right}
\def\H{\mathbb H}
\def\B{\mathbb B}
\def\leqs{<}
\def\geqs{>}
\numberwithin{equation}{section}
\title{The sharp Adams type inequalities in the hyperbolic spaces under the Lorentz-Sobolev norms}
\author{Van Hoang Nguyen
}
\begin{document}
\maketitle


\renewcommand{\thefootnote}{}

\footnote{Email:  \href{mailto: Van Hoang Nguyen <vanhoang0610@yahoo.com>}{vanhoang0610@yahoo.com}.} 

\footnote{2010 \emph{Mathematics Subject Classification\text}: 26D10, 46E35, 46E30, }

\footnote{\emph{Key words and phrases\text}: Adams inequality, improved Adams inequality, Hardy--Adams inequality Lorentz--Sobolev space, hyperbolic spaces, rearrangement argument.}

\renewcommand{\thefootnote}{\arabic{footnote}}
\setcounter{footnote}{0}

\begin{abstract}
Let $2\leq m \leqs n$ and $q \in (1,\infty)$, we denote by $W^mL^{\frac nm,q}(\mathbb H^n)$ the Lorentz--Sobolev space of order $m$ in the hyperbolic space $\mathbb H^n$. In this paper, we establish the following Adams inequality in the Lorentz--Sobolev space $W^m L^{\frac nm,q}(\mathbb H^n)$
\[
\sup_{u\in W^mL^{\frac nm,q}(\mathbb H^n),\, \|\nabla_g^m u\|_{\frac nm,q}\leq 1} \int_{\mathbb H^n} \Phi_{\frac nm,q}\big(\beta_{n,m}^{\frac q{q-1}} |u|^{\frac q{q-1}}\big) dV_g \leqs \infty
\]
for $q \in (1,\infty)$ if $m$ is even, and $q \in (1,n/m)$ if $m$ is odd, where $\beta_{n,m}^{q/(q-1)}$ is the sharp exponent in the Adams inequality under Lorentz--Sobolev norm in the Euclidean space. 
To our knowledge, much less is known about the Adams inequality under the Lorentz--Sobolev norm in the hyperbolic spaces. We also prove an improved Adams inequality under the Lorentz--Sobolev norm provided that $q\geq 2n/(n-1)$ if $m$ is even and $2n/(n-1) \leq q \leq \frac nm$ if $m$ is odd,
\[
\sup_{u\in W^mL^{\frac nm,q}(\mathbb H^n),\, \|\na_g^m u\|_{\frac nm,q}^q -\lam \|u\|_{\frac nm,q}^q \leq 1} \int_{\mathbb H^n} \Phi_{\frac nm,q}\big(\beta_{n,m}^{\frac q{q-1}} |u|^{\frac q{q-1}}\big) dV_g \leqs \infty
\]
for any $0\leqs \lambda \leqs C(n,m,n/m)^q$ where $C(n,m,n/m)^q$ is the sharp constant in the Lorentz--Poincar\'e inequality. Finally, we establish a Hardy--Adams inequality in the unit ball when $m\geq 3$, $n\geq 2m+1$ and $q \geq 2n/(n-1)$ if $m$ is even and $2n/(n-1) \leq q \leq n/m$ if $m$ is odd
\[
\sup_{u\in W^mL^{\frac nm,q}(\mathbb H^n),\, \|\na_g^m u\|_{\frac nm,q}^q -C(n,m,\frac nm)^q \|u\|_{\frac nm,q}^q \leq 1} \int_{\mathbb B^n} \exp\big(\beta_{n,m}^{\frac q{q-1}} |u|^{\frac q{q-1}}\big) dx \leqs \infty.
\]

\end{abstract}

\section{Introduction}
It is well-known that the Sobolev's embedding theorems play the important roles in the analysis, geometry, partial differential equations, etc. Let $m\geq 1$, we we traditionally use the notation
\[
\na^m = \begin{cases}
\Delta^{\frac m2} &\mbox{if $m$ is even,}\\
\na \Delta^{\frac{m-1}2} &\mbox{if $m$ is odd}
\end{cases}
\]
to denote the $m-$th derivatives. For a bounded domain $\Om\subset \R^n, n\geq 2$ and $1\leq p \leqs \infty$, we denote by $W^{m,p}_0(\Om)$ the usual Sobolev spaces which is the completion of $C_0^\infty(\Om)$ under the Dirichlet norm $\|\na^m u\|_{L^p(\Om)} = \Big(\int_\Om |\na^m u|^p dx \Big)^{\frac1p}$. The Sobolev inequality asserts that $W^{m,p}_0(\Om) \hookrightarrow L^q(\Om)$ for any $q \leq \frac{np}{n-mp}$ provided $mp \leqs n$. However, in the limits case $mp = n$ the embedding $W^{m,\frac nm}_0(\Om) \hookrightarrow L^\infty(\Om)$ fails. In this situation, the Moser--Trudinger inequality and Adams inequality are perfect replacements. The Moser--Trudinger inequality was proved independently by Yudovic \cite{Yudovic1961}, Pohozaev \cite{Pohozaev1965} and Trudinger \cite{Trudinger67}. This inequality was then sharpened by Moser \cite{Moser70} in the following form
\begin{equation}\label{eq:Moserineq}
\sup_{u\in W^{1,n}_0(\Om), \|\nabla u\|_{L^n(\Om)} \leq 1} \int_\Om e^{\alpha |u|^{\frac n{n-1}}} dx \leqs \infty
\end{equation}
for any $\al \leq \al_{n}: = n \om_{n-1}^{\frac 1{n-1}}$ where $\om_{n-1}$ denotes the surface area of the unit sphere in $\R^n$. Furthermore, the inequality \eqref{eq:Moserineq} is sharp in the sense that the supremum in \eqref{eq:Moserineq} will be infinite if $\al \geqs \al_n$.  The inequality \eqref{eq:Moserineq} was generalized to higher order Sobolev spaces $W^{m,\frac nm}_0(\Om)$ by Adams \cite{Adams} in the following form
\begin{equation}\label{eq:AMT}
\sup_{u \in W^{m,n}_0(\Om), \, \int_\Om |\na^m u|^{\frac nm} dx \leq 1} \int_\Om e^{\al |u|^{\frac n{n-m}}} dx \leqs \infty,
\end{equation}
for any 
\[
\al \leq \al_{n,m}: = \begin{cases}
\frac 1{\si_n}\Big(\frac{\pi^{n/2} 2^m \Gamma(\frac m2)}{\Gamma(\frac{n-m}2)}\Big)^{\frac n{n-m}} &\mbox{if $m$ is even},\\
\frac 1{\si_n}\Big(\frac{\pi^{n/2} 2^m \Gamma(\frac {m+1}2)}{\Gamma(\frac{n-m+1}2)}\Big)^{\frac n{n-m}} &\mbox{if $m$ is odd},
\end{cases}
\] 
where $\si_n = \om_{n-1}/n$ is the volume of the unit ball in $\R^n$. Moreover, if $\al \geqs \al_{n,m}$ then the supremum in \eqref{eq:AMT} becomes infinite though all integrals are still finite. 

The Moser-Trudinger inequality \eqref{eq:Moserineq} and Adams inequality \eqref{eq:AMT} play the role of the Sobolev embedding theorems in the limiting case $mp = n$. They have many applications to study the problems in analysis, geometry, partial differential equations, etc such as the Yamabe's equation, the $Q-$curvature equations, especially the problems in partial differential equations with exponential nonlinearity, etc. There have been many generalizations of the Moser--Trudinger inequality and Adams inequality in literature. For examples, the Moser--Trudinger inequality and Adams inequality were established in the Riemannian manifolds in \cite{YangSuKong,ManciniSandeep2010,AdimurthiTintarev2010,ManciniSandeepTintarev2013,Bertrand,Karmakar,LuTang2013,DongYang} and were established in the subRiemannian manifolds in \cite{CohnLu,CohnLu1,Balogh}. The singular version of the Moser--Trudinger inequality and Adams inequality was proved in \cite{AdimurthiSandeep2007,LamLusingular}. The Moser--Trudinger inequality and Adams inequality were extended to unbounded domains and whole spaces in \cite{Ruf2005,LiRuf2008,RufSani,AdimurthiYang2010,LamLuHei,Adachi00,LamLuAdams,LamLunew}, and to fractional order Sobolev spaces in \cite{Martinazzi,FM1,FM2}. The improved version of the Moser--Trudinger inequality and Adams inequality were given in \cite{AdimurthiDruet2004,Tintarev2014,WangYe2012,Nguyenimproved,LuYangAiM,Nguyen4,delaTorre,Mancini,Yangjfa,DOO,NguyenCCM,LuZhu,LuYangHA,LiLuYang}. An interesting question concerning to the Moser--Trudinger inequality and Adams inequality is whether or not the extremal functions exist. For this interesting topic, the reader may consult the papers \cite{Carleson86,Flucher92,Lin96,Ruf2005,LiRuf2008,Chen,LiYang,LuZhu,NguyenCCM,LuYangAiM,Nguyen4} and many other papers. 

Another generalization of the Moser--Trudinger inequality and Adams inequality is to establish the inequalities of same type in the Lorentz--Sobolev spaces. The Moser--Trudinger inequality and the Adams inequality in the Lorentz spaces was established by Alvino, Ferone and Trombetti \cite{Alvino1996} and Alberico \cite{Alberico} in the following form
\begin{equation}\label{eq:AMTLorentz}
\sup_{u\in W^m L^{\frac nm,q}(\Om), \, \|\na^m u\|_{\frac nm,q} \leq 1} \int_{\Om} e^{\al |u|^{\frac q{q-1}}} dx < \infty
\end{equation}
for any $\al \leq \beta_{n,m}^{\frac q{q-1}}$ with
\begin{equation*}
\beta_{n,m} = 
\begin{cases}
\frac{\pi^{n/2} 2^m \Gamma(\frac m2)}{\si_n^{(n-m)/n} \Gamma(\frac{n-m}2)}&\mbox{if $m$ is even,}\\
\frac{\pi^{n/2} 2^m \Gamma(\frac {m+1}2)}{\si_n^{(n-m)/n} \Gamma(\frac{n-m+1}n)}&\mbox{if $m$ is odd.}
\end{cases}
\end{equation*}
The constant $\beta_{n,m}$ is sharp in \eqref{eq:AMTLorentz} in the sense that the supremum will become infinite if $\al > \beta_{n,m}^{\frac q{q-1}}$. For unbounded domains in $\R^n$, the Moser--Trudinger inequality was proved by Cassani and Tarsi \cite{CassaniTarsi2009} (see Theorem $1$ and Theorem $2$ in \cite{CassaniTarsi2009}). In \cite{LuTang2016}, Lu and Tang proved several sharp singular Moser--Trudinger inequalities in the Lorentz--Sobolev spaces which generalize the results in \cite{Alvino1996,CassaniTarsi2009} to the singular weights. The singular Adams type inequalities in the Lorentz--Sobolev spaces were studied by the author in \cite{NguyenLorentz}.

The motivation of this paper is to study the Adams inequalities in the hyperbolic spaces under the Lorentz--Sobolev norm. For $n\geq 2$, let us denote by $\H^n$ the hyperbolic space of dimension $n$, i.e., a complete, simply connected, $n-$dimensional Riemmanian manifold having constant sectional curvature $-1$. The aim in this paper is to generalize the main results obtained by the author in \cite{Nguyen2020a} to the higher order Lorentz--Sobolev spaces in $\H^n$. Before stating our results, let us fix some notation. Let $V_g, \na_g$ and $\Delta_g$ denote the volume element, the hyperbolic gradient and the Laplace--Beltrami operator in $\H^n$ with respect to the metric $g$ respectively. For higher order derivatives, we shall adopt the following convention
\[
\na_g^m \cdot = \begin{cases}
\Delta_g^{\frac m2} \cdot &\mbox{if $m$ is even,}\\
\na_g (\Delta_g^{\frac{m-1}2} \cdot) &\mbox{if $m$ is odd.}
\end{cases}
\]
Furthermore, for simplicity, we write $|\na^m_g \cdot|$ instead of $|\na_g^m \cdot|_g$ when $m$ is odd if no confusion occurs. For $1\leq p, q\leqs \infty$, we denote by $L^{p,q}(\H^n)$ the Lorentz space in $\H^n$ and by $\|\cdot\|_{p,q}$ the Lorentz quasi-norm in $L^{p,q}(\H^n)$. When $p=q$, $\|\cdot\|_{p,p}$ is replaced by $\|\cdot\|_p$ the Lebesgue $L_p-$norm in $\H^n$, i.e., $\|f\|_p = (\int_{\H^n} |f|^p dV_g)^{\frac1p}$ for a measurable function $f$ on $\H^n$. The Lorentz--Sobolev space $W^m L^{p,q}(\H^n)$ is defined as the completion of $C_0^\infty(\H^n)$ under the Lorentz quasi-norm $\|\na_g^m u\|_{p,q}:=\| |\na_g^m u| \|_{p,q}$. In \cite{Nguyen2020a,Nguyen2020b}, the author proved the following Poincar\'e inequality in $W^1 L^{p,q}(\H^n)$
\begin{equation}\label{eq:Poincare}
\|\na_g^m u\|_{p,q}^q \geq C(n,m,p)^q \|u\|_{p,q}^q,\quad\forall\, u\in W^m L^{p,q}(\H^n).
\end{equation}
provided $1\leqs q \leq p$ if $m$ is odd and for any $1\leqs q \leqs \infty$ if $m$ is even, where
\begin{equation*}
C(n,m,p) = \begin{cases}
(\frac{(n-1)^2}{pp'})^{\frac m2} &\mbox{if $m$ is even,}\\
\frac {n-1}p (\frac{(n-1)^2}{pp'})^{\frac {m-1}2}&\mbox{if $m$ is odd,}
\end{cases}
\end{equation*}
with $p' = p/(p-1)$. Furthermore, the constant $C(n,m,p)^q$ in \eqref{eq:Poincare} is the best possible and is never attained. The inequality \eqref{eq:Poincare} generalizes the result in \cite{NgoNguyenAMV} to the setting of Lorentz--Sobolev space.

The Moser--Trudinger inequality in the hyperbolic spaces was firstly proved by Mancini and Sandeep \cite{ManciniSandeep2010} in the dimension $n =2$ (another proof of this result was given by Adimurthi and Tintarev \cite{AdimurthiTintarev2010}) and by Mancini, Sandeep and Tintarev \cite{ManciniSandeepTintarev2013} in higher dimension $n\geq 3$ (see \cite{FontanaMorpurgo2020} for an alternative proof)
\begin{equation}\label{eq:MThyperbolic}
\sup_{u\in W^{1,n}(\H^n),\, \int_{\H^n} |\na_g u|_g^n dV_g \leq 1} \int_{\H^n} \Phi(\al_n |u|^{\frac n{n-1}}) dV_g < \infty,
\end{equation}
where $\Phi(t) = e^t -\sum_{j=0}^{n-2} \frac{t^j}{j!}$. Lu and Tang \cite{LuTang2013} also established the sharp singular Moser--Trudinger inequality under the conditions $\|\na u\|_{L^n(\H^n)}^n + \tau \|u\|_{L^n(\H^n)}^n \leq 1$ for any $\tau >0$ (see Theorem $1.4$ in \cite{LuTang2013}). In \cite{NguyenMT2018}, the author improves the inequality \eqref{eq:MThyperbolic} by proving the following inequality 
\begin{equation}\label{eq:NguyenMT}
\sup_{u\in W^{1,n}(\H^n),\, \int_{\H^n} |\na_g u|_g^n dV_g - \lam \int_{\H^n} |u|^n dV_g \leq 1} \int_{\H^n} \Phi(\al_n |u|^{\frac n{n-1}}) dV_g < \infty,
\end{equation}
for any $\lambda < (\frac{n-1}n)^n$. The Adams inequality in the hyperbolic spaces were proved by Karmakar and Sandeep \cite{Karmakar} in the following form
\begin{equation*}
\sup_{u\in C_0^\infty(\H^{2n} \int_{\H^{2n}} P_nu \cdot u dV_g \leq 1} \int_{\H^{2n}} \Big(e^{ \al_{2n,n} u^2} -1\Big) dV_g < \infty.
\end{equation*}
where $P_k$ is the GJMS operator on the hyperbolic spaces $\H^{2n}$, i.e., $P_1 = -\Delta_g -n(n-1)$ and 
\[
P_k = P_1(P_1+2)\cdots (P_1 + k(k-1)),\quad k\geq 2.
\]
In recent paper, Fontana and Morpurgo \cite{FM2} established the following Adams inequality in the hyperbolic spaces $\H^n$,
\begin{equation}\label{eq:FM}
\sup_{u\in W^{m,\frac nm}(\H^n), \int_{\H^n} |\na_g^m u|^{\frac nm} dV_g \leq 1} \int_{\H^n} \Phi_{\frac nm}(\al_{n,m} |u|^{\frac n{n-m}}) dV_g < \infty
\end{equation}
where
\[
\Phi_{\frac nm}(t) = e^{t} -\sum_{j=0}^{j_{\frac nm} -2} \frac{t^j}{j!}, \quad\text{\rm and }\quad j_{\frac nm} = \min\{j\, :\, j \geq \frac nm\} \geq \frac nm.
\]
In \cite{NgoNguyenRMI}, Ngo and the author proved several Adams type inequalities in the hyperbolic spaces.

To our knowledge, much less is known about the Trudinger--Moser inequality and Adams inequality under the Lorentz--Sobolev norm on complete noncompact Riemannian manifolds except Euclidean spaces. Recently, Yang and Li \cite{YangLi2019} proves a sharp Moser--Trudinger inequality in the Lorentz--Sobolev spaces defined in the hyperbolic spaces. More precisely, their result (\cite[Theorem $1.6$]{YangLi2019}) states that for $1\leqs q \leqs \infty$  it holds
\begin{equation*}
\sup_{u\in W^1L^{n,q}(\H^n),\, \|\na_g u\|_{n,q} \leq 1} \int_{\H^n} \Phi_{n,q}(\al_{n,q} |u|^{\frac q{q-1}}) dV_g \leqs \infty,
\end{equation*}
where
\[
\Phi_{a,q}(t) =e^t - \sum_{j=0}^{j_{a,q} -2} \frac{t^j}{j!},\quad \text{\rm where}\,\, j_{a,q} = \min\{j\in \N\, :\, j \geqs 1+ a(q-1)/q\},
\]
with $a \geqs 1$.

The first aim in this paper is to establish the sharp Adams inequality in the hyperbolic spaces under the Lorentz--Sobolev norm which generalize the result of Yang and Li to higher order derivatives. Our fist result in this paper reads as follows.

\begin{theorem}\label{MAINI}
Let $n\geqs m \geq 2$ and $q \in (1,\infty)$. Then it holds
\begin{equation}\label{eq:AdamsLorentz}
\sup_{u\in W^mL^{\frac nm,q}(\H^n),\, \|\na_g^m u\|_{\frac nm,q}\leq 1} \int_{\H^n} \Phi_{\frac nm,q}\big(\beta_{n,m}^{\frac q{q-1}} |u|^{\frac q{q-1}}\big) dV_g \leqs \infty,
\end{equation}
for any $q \in (1,\infty)$ if $m$ is even, or $1\leqs q \leq \frac nm$ if $m$ is odd. Futhermore, the constant $\beta_{n,m}^{\frac q{q-1}}$ is sharp in the sense that the supremum in \eqref{eq:AdamsLorentz} will become infinite if $\beta_{n,m}^{\frac q{q-1}}$ is replaced by any larger constant.
\end{theorem}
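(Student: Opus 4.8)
The plan is to follow the now-standard strategy for Adams-type inequalities on $\H^n$: reduce the problem to a one-dimensional inequality on the half-line via symmetrization, and then quote (or re-derive in the Lorentz setting) the sharp one-dimensional Adams lemma of Adams--Moser type. Concretely, given $u\in W^mL^{\frac nm,q}(\H^n)$ with $\|\na_g^m u\|_{\frac nm,q}\le 1$, I would first pass to the symmetric decreasing rearrangement $u^\sharp$ of $u$ on $\H^n$ (the geodesic-ball rearrangement). The key structural input is a Polya--Szeg\H{o}-type principle for the Lorentz--Sobolev norm of the $m$-th order hyperbolic gradient: namely $\|\na_g^m u^\sharp\|_{\frac nm,q}\le \|\na_g^m u\|_{\frac nm,q}$, which for $m\ge 2$ typically proceeds by iterating the first-order rearrangement inequality together with the comparison $|\na_g(\Delta_g u)^\sharp|\le$ (something controlled by $|\na_g^{m}u|^\sharp$); here the higher-order case for $m$ even uses that $\Delta_g$ is the relevant operator and for $m$ odd one loses a derivative, which is exactly why the hypothesis degrades to $q\le n/m$. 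I expect this rearrangement step to carry over from \cite{Nguyen2020a,NguyenLorentz} with the Euclidean Lorentz machinery of Alvino--Ferone--Trombetti and Alberico \cite{Alvino1996,Alberico} replaced by its hyperbolic analogue. After rearrangement, writing things in the geodesic radial variable $r$ and using the explicit volume density $\sinh^{n-1}r$, the quantity $\|\na_g^m u^\sharp\|_{\frac nm,q}$ becomes a weighted one-dimensional Lorentz norm of the radial derivatives of the profile.

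Next I would perform the change of variables $t = $ (a suitable function of the geodesic volume of the ball of radius $r$), chosen so that the hyperbolic volume element becomes $dt$ on $[0,\infty)$ and the Lorentz--Sobolev constraint becomes, up to a harmless remainder, the flat constraint $\int_0^\infty |\phi'(t)|^{\frac{n}{m}}\,(\text{Lorentz weight})\,dt\le 1$ for the transformed profile $\phi$. Because the hyperbolic volume grows like $e^{(n-1)r}$ for large $r$ while the Euclidean-model volume grows polynomially, this substitution produces an \emph{extra} exponentially-decaying factor in the measure on the target side, which is precisely what makes the full (subtracted) series $\Phi_{\frac nm,q}$ — rather than a single exponential — the natural object, and what forces the lower-order terms $\sum_{j=0}^{j_{n/m,q}-2} t^j/j!$ to be subtracted off. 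The heart of the matter is then the one-dimensional Adams--Moser lemma in Lorentz form: if $\phi(0)=0$ and $\int_0^\infty |\phi'(t)|^{a}w(t)\,dt\le 1$ with the appropriate weight, then $\int_0^\infty \Phi_{a,q}\big(\beta\,|\phi(t)|^{a'}\big)e^{-ct}\,dt<\infty$ with $\beta$ the sharp Lorentz exponent. This is where the value $\beta_{n,m}^{q/(q-1)}$ enters: it is exactly the flat sharp constant, and it survives the transplantation to $\H^n$ because the curvature only helps (it adds a coercive lower-order term and an exponential gain in the measure).

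For the sharpness half of the statement I would exhibit the standard Moser-type test sequence: take radial functions concentrating at a point of $\H^n$, modeled on the extremal family for the flat Lorentz--Adams inequality \eqref{eq:AMTLorentz}, truncated and normalized so that $\|\na_g^m u_k\|_{\frac nm,q}=1$. Since near the concentration point $\H^n$ looks Euclidean to all relevant orders, plugging these into the supremum with any constant $\beta>\beta_{n,m}^{q/(q-1)}$ makes the integral diverge, by the same computation that establishes optimality of $\beta_{n,m}$ in the Euclidean case \cite{Alberico,NguyenLorentz}. The main obstacle, and the step that will require genuine work rather than citation, is the higher-order Polya--Szeg\H{o} inequality for the hyperbolic Lorentz--Sobolev norm when $m\ge 3$: controlling $\|\na_g^m u^\sharp\|_{\frac nm,q}$ by $\|\na_g^m u\|_{\frac nm,q}$ requires a careful iteration in which one alternately rearranges and applies the hyperbolic Talenti-type comparison for $\Delta_g$, keeping track of the Lorentz (two-parameter) norm at each stage; it is precisely the odd-$m$ iteration that does not close for $q>n/m$, which accounts for the asymmetry in the hypotheses of the theorem. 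A secondary technical point is justifying that the remainder terms produced by the non-exact change of variables (the difference between the hyperbolic radial operator and its Euclidean model) are absorbable — this is handled, as in \cite{Nguyen2020a,FM2}, by showing they contribute only to the subtracted polynomial part and to an $L^{\frac nm,q}$-bounded correction that the Poincar\'e inequality \eqref{eq:Poincare} controls.
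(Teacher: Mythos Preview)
Your proposal rests on a step that does not hold: a higher-order P\'olya--Szeg\H{o} principle of the form $\|\na_g^m u^\sharp\|_{\frac nm,q}\le \|\na_g^m u\|_{\frac nm,q}$ for $m\ge 2$. Such an inequality is false already in the Euclidean setting (rearrangement can increase $\|\Delta u\|_p$), and no amount of ``careful iteration'' of the first-order principle will produce it, because the Talenti comparison goes in the wrong direction for that purpose. This is not a technicality but the central obstruction that forced Adams, and everyone since, to abandon direct symmetrization of $u$ for higher-order problems.

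What the paper actually does for $m=2$ is to set $f=-\Delta_g u$ (so the constraint is $\|f\|_{\frac n2,q}\le 1$) and invoke the pointwise Talenti-type bound \eqref{eq:NgoNguyen}, namely $u^*(t)\le v(t)=\int_t^\infty s f^{**}(s)\,(n\si_n\sinh^{n-1}F(s))^{-2}\,ds$. This is a comparison of $u^*$ against an integral operator acting on the rearrangement of $f$, not a rearrangement inequality for derivatives of $u$. One then splits $\int_0^\infty=\int_0^1+\int_1^\infty$; the tail is handled by the Lorentz--Poincar\'e inequality \eqref{eq:Poincare} giving $u^*(t)\le Ct^{-2/n}$, while on $(0,1)$ the substitution $t=e^{-s}$ puts $v$ in the form $\int_{\R}a(s,t)\phi(s)\,ds$ with a kernel $a$ satisfying the hypotheses of Adams' original one-dimensional lemma \cite{Adams}, which then gives the bound. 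For $m=2k$ with $k\ge2$ one iterates the Lorentz--Sobolev inequality \eqref{eq:LSorder2} for $\Delta_g$ to reduce to $m=2$. For $m=2k+1$ odd the first-order P\'olya--Szeg\H{o} (Theorem~\ref{PS}) is applied to $\Delta_g^k u$---not to $u$---and this is exactly where the restriction $q\le\frac nm$ enters; a Hardy inequality then reduces to the even case. Your sharpness argument via localized Moser-type test functions is essentially the paper's, but the finiteness direction needs to be rebuilt around the representation formula and Adams' kernel lemma rather than a nonexistent higher-order rearrangement principle.
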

Let us make some comments on Theorem \ref{MAINI}. When $q =\frac nm$, we obtain the inequality \eqref{eq:FM} of Fontana and Morpurgo from Theorem \ref{MAINI}. However, our approach is completely different with the one of Fontana and Morpurgo. Notice that in the case that $m$ is odd, we need an extra assumption Notice $q \leq \frac nm$ comparing with case that $m$ is even. This extra condition is a technical condition in our approach for which we can apply the P\'olya--Szeg\"o principle in the hyperbolic space (see Theorem \ref{PS} below). This principle was proved by the author in \cite{Nguyen2020a} which generalizes the classical P\'olya--Szeg\"o principle in Euclidean space to the hyperbolic space. Note that when $m=1$, the extra condition is not need by the result of Yang and Li \cite{YangLi2019}. The approach of Yang and Li is based on an representation formula for function via Green's function of the Laplace-Beltrami $-\Delta_g$ (similar with the one of Fontana and Morpurgo \cite{FM2}). Hence, we believe that the extra condition $q \leq \frac nm$ is superfluous when $m > 1$ is odd. One reasonable approach is to follow the one of Fontana and Morpurgo by using the representation formulas and estimates in \cite[Section $5$]{FM2}. This problem is left for interesting reader. 

Next, we aim to improve the Lorentz--Adams inequality in Theorem \ref{MAINI} in spirit of \eqref{eq:NguyenMT}. In the case $m=1$, an analogue of \eqref{eq:NguyenMT} under Lorentz--Sobolev norm was obtained by the author in \cite[Theorem $1.3$]{Nguyen2020a}. The result for $m > 1$ is given in the following theorem.

\begin{theorem}\label{MAINII}
Let $n > m\geq 2$ and $q \geq \frac{2n}{n-1}$. Suppose in addition that $q \leq \frac nm$ if $m$ is odd. Then we have
\begin{equation}\label{eq:improvedAL}
\sup_{u\in W^mL^{\frac nm,q}(\H^n),\, \|\na_g^m u\|_{\frac nm,q}^q -\lam \|u\|_{\frac nm,q}^q \leq 1} \int_{\H^n} \Phi_{\frac nm,q}\big(\beta_{n,m}^{\frac q{q-1}} |u|^{\frac q{q-1}}\big) dV_g \leqs \infty.
\end{equation}
for any $ \lam \leqs C(n,m,\frac nm)^q$.
\end{theorem}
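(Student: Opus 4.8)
The strategy is to reduce the improved inequality \eqref{eq:improvedAL} to the plain Lorentz--Adams inequality \eqref{eq:AdamsLorentz} of Theorem \ref{MAINI}, via a truncation/comparison argument combined with the Lorentz--Poincar\'e inequality \eqref{eq:Poincare}. The key point is that the constraint $\|\na_g^m u\|_{\frac nm,q}^q - \lambda\|u\|_{\frac nm,q}^q \le 1$ with $\lambda < C(n,m,\frac nm)^q$ is, by \eqref{eq:Poincare}, strictly stronger than saying that some \emph{renormalized} quantity is bounded; indeed \eqref{eq:Poincare} gives $\|\na_g^m u\|_{\frac nm,q}^q - \lambda\|u\|_{\frac nm,q}^q \ge (1-\lambda/C(n,m,\frac nm)^q)\|\na_g^m u\|_{\frac nm,q}^q$, so that on the constraint set one has an \emph{a priori} bound $\|\na_g^m u\|_{\frac nm,q}^q \le (1-\lambda/C(n,m,\frac nm)^q)^{-1} =: \Theta < \infty$. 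This is not yet enough to conclude, since $\beta_{n,m}^{q/(q-1)}$ is critical, so the direct substitution into Theorem \ref{MAINI} loses the sharp exponent. Instead I would localize: split $\int_{\H^n}\Phi_{\frac nm,q}(\beta_{n,m}^{q/(q-1)}|u|^{q/(q-1)})\,dV_g$ over the region $\{|u|>L\}$ and its complement $\{|u|\le L\}$ for a large threshold $L$.

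On $\{|u|\le L\}$ the integrand is bounded by a constant times $|u|^{j_{n/m,q}}$ (the leading term of $\Phi_{\frac nm,q}$), which is integrable by the Lorentz--Sobolev embedding since $j_{n/m,q}\ge 2$ and $u\in W^mL^{\frac nm,q}(\H^n)$ is controlled by $\Theta$; one uses here the standard Sobolev-type embedding $W^mL^{\frac nm,q}(\H^n)\hookrightarrow L^{r}(\H^n)$ for all finite $r$ plus interpolation, together with the uniform bound on $\|\na_g^m u\|_{\frac nm,q}$, to get a uniform bound on that piece. On $\{|u|>L\}$, the main trick is to pass to the truncation $u_L := \mathrm{sign}(u)(|u|-L)_+$; by the standard property of truncations under the Dirichlet-type Lorentz norm (in the $m=1$ case this is immediate from the pointwise inequality on gradients, and for higher $m$ one invokes the representation via the Green function of $-\Delta_g$ and the P\'olya--Szeg\"o-type machinery of Theorem \ref{PS}, which is exactly where the hypothesis $q\ge \frac{2n}{n-1}$ and, for odd $m$, $q\le\frac nm$, enter), one obtains $\|\na_g^m u_L\|_{\frac nm,q}\le \|\na_g^m u\|_{\frac nm,q}$ and, crucially, a gain from the Poincar\'e deficit that forces $\|\na_g^m u_L\|_{\frac nm,q}\le 1$ once $L$ is chosen large depending only on $n,m,q,\lambda$; combined with the elementary inequality $|u|^{q/(q-1)}\le |u_L|^{q/(q-1)}+\varepsilon|u_L|^{q/(q-1)}+C_\varepsilon L^{q/(q-1)}$ on that region, one reduces the tail integral to $\int_{\H^n}\Phi_{\frac nm,q}(\beta_{n,m}^{q/(q-1)}(1+\varepsilon)|u_L|^{q/(q-1)})\,dV_g$ times a fixed constant, and applies Theorem \ref{MAINI} to the rescaled function $u_L/\|\na_g^m u_L\|_{\frac nm,q}$ after absorbing the $(1+\varepsilon)$ by noting that $\|\na_g^m u_L\|_{\frac nm,q}$ stays bounded away from $1$ by a margin coming from $L$.

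The quantitative heart is to make these two competing estimates close: the threshold $L$ must be large enough that the truncated function lands strictly inside the unit ball of $W^mL^{\frac nm,q}$ (so that the finiteness from Theorem \ref{MAINI} is quantitative and uniform), yet the contribution from $\{|u|\le L\}$ grows only polynomially in $L$ and is controlled by the \emph{a priori} bound $\Theta$; choosing $L=L(n,m,q,\lambda)$ appropriately balances these. The main obstacle I anticipate is the truncation step for $m\ge 2$: unlike the first-order case, $|\na_g^m u_L|$ is not pointwise dominated by $|\na_g^m u|$, so one must work through the Green's-function representation $u = G_m * (\na_g^m u)$ (or its even/odd variants) on $\H^n$, apply the rearrangement inequality and P\'olya--Szeg\"o principle of Theorem \ref{PS} to pass to radially symmetric comparison functions on a model space, and only there perform the truncation; verifying that $\|\na_g^m u_L\|_{\frac nm,q}^q \le \|\na_g^m u\|_{\frac nm,q}^q - c\,L^{?}\,\|(|u|-L)_+\|_{\frac nm,q}^q$ — i.e.\ that the truncation genuinely eats the Poincar\'e deficit — is the delicate inequality. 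This is precisely the reason $q\ge \frac{2n}{n-1}$ is imposed, as it guarantees the relevant one-dimensional weighted Hardy/Poincar\'e inequality underlying the P\'olya--Szeg\"o reduction has the correct sign; with that in hand the rest is bookkeeping.
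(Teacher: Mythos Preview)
Your proposal has a genuine gap at the truncation step for $m\ge 2$. The function $u_L=\mathrm{sign}(u)(|u|-L)_+$ is only Lipschitz, and for $m\ge 2$ there is no inequality $\|\na_g^m u_L\|_{\frac nm,q}\le \|\na_g^m u\|_{\frac nm,q}$: already for $m=2$ the distribution $\Delta_g u_L$ carries a singular measure on $\{|u|=L\}$, and no rearrangement or Green's-function representation removes this. Passing first to the radial decreasing rearrangement (which is what Theorem~\ref{PS} and the estimate \eqref{eq:NgoNguyen} buy you) does not help, since the same obstruction persists for radial functions. Your claimed ``gain'' inequality $\|\na_g^m u_L\|_{\frac nm,q}^q\le \|\na_g^m u\|_{\frac nm,q}^q - c\,\|(|u|-L)_+\|_{\frac nm,q}^q$ is therefore unsupported and in fact false for $m\ge 2$. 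You also misattribute the role of the hypothesis $q\ge \frac{2n}{n-1}$: it is not a condition in the P\'olya--Szeg\"o principle (Theorem~\ref{PS} needs $q\le p$), but rather the hypothesis under which the pointwise estimate $\sinh^{q(n-1)}(F(t))\ge (t/\si_n)^{q(n-1)/n}+((n-1)/n)^q(t/\si_n)^q$ holds, which is what drives Propositions~\ref{L1} and~\ref{L2}.

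The paper's proof avoids truncation entirely. For $m\ge 3$ it uses the Poincar\'e--Sobolev inequalities \eqref{eq:improvedLS1a} and \eqref{eq:improvedLS2a} (this is where $q\ge \frac{2n}{n-1}$ enters) together with \eqref{eq:Poincare} to show that the constraint $\|\na_g^m u\|_{\frac nm,q}^q-\lam\|u\|_{\frac nm,q}^q\le 1$ forces a bound on a \emph{lower-order} norm, namely $\|\na_g^{m-2}u\|_{\frac n{m-2},q}\le c_{n,m}^{-1}$ (for $m$ even) or $\|\Delta_g^{(m-1)/2}u\|_{\frac n{m-1},q}\le c_{n,m}^{-1}$ (for $m$ odd), with the constant $c_{n,m}$ matching the ratio $\beta_{n,m}/\beta_{n,m-2}$ (resp.\ $\beta_{n,m}/\beta_{n,m-1}$); one then applies Theorem~\ref{MAINI} at the reduced order with the \emph{critical} exponent intact. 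The base case $m=2$ is handled differently: via \eqref{eq:improvedLS2} the problem is reduced to a one-dimensional Moser--Trudinger inequality in fractional dimension (Lemma~\ref{MT}), proved by a Ruf-type argument. No step involves truncating $u$.
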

Obviously, Theorem \ref{MAINII} is stronger than Theorem \ref{MAINI}. The extra condition $q \geq \frac{2n}{n-1}$ in Theorem \ref{MAINII} is to apply a crucial point-wise estimate in \cite[Lemma $2.1$]{NguyenPS2018}. Theorem \ref{MAINII} is proved by using iteration method and some estimates in \cite{Nguyen2020b} which we will recall in Section \S2 below.

The Hardy--Moser--Trudinger inequality was proved by Wang and Ye (see \cite{WangYe2012}) in dimension $2$
\begin{equation}\label{eq:WangYe}
\sup_{u \in W^{1,2}_0(\B^2), \int_{\B^2} |\na u|^2 dx - \int_{\B^2} \frac{u^2}{(1-|x|^2)^2} dx \leq 1} \int_{\B^2} e^{4\pi u^2} dx < \infty.
\end{equation}
The inequality \eqref{eq:WangYe} is stronger than the classical Moser--Trudinger inequality in $\B^2$. It connects both the sharp Moser--Trudinger inequality in $\B^2$ and the sharp Hardy inequality in $\B^2$
\[
\int_{\B^2} |\na u|^2 dx \geq  \int_{\B^2} \frac{u^2}{(1-|x|^2)^2} dx, \quad u \in W^{1,2}_0(\B^2).
\]
The higher dimensional version of \eqref{eq:WangYe} was recently established by the author \cite{NguyenHMT}
\[
\sup_{u \in W^{1,n}_0(\B^n), \int_{\B^n} |\na u|^n dx - \lt(\frac{2(n-1)}n\rt)^n\int_{\B^n} \frac{|u|^n}{(1-|x|^2)^n} dx \leq 1} \int_{\B^2} e^{\al_n |u|^{\frac n{n-1}}} dx < \infty.
\]
For higher order derivatives, the sharp Hardy--Adams inequality was proved by Lu and Yang \cite{LuYangHA} in dimension $4$ and by Li, Lu and Yang \cite{LiLuYang} in any even dimension. The approach in \cite{LuYangHA,LiLuYang} relies heavily on the Hilbertian structure of the space $W^{\frac n2,2}_0(\B^n)$ with $n$ even for which the Fourier analysis in the hyperbolic spaces can be applied. Our next motivation in this paper is to establish the sharp Hardy--Adams inequality in any dimension. Our next result reads as follows.

\begin{theorem}\label{HARDYADAMS}
Let $m \geq 3$, $n \geq 2m+1$ and $q \geq \frac{2n}{n-1}$. Suppose in addition that $q \leq \frac nm$ if $m$ is odd. Then it holds
\begin{equation}\label{eq:HAineq}
\sup_{u\in W^mL^{\frac nm,q}(\H^n),\, \|\na_g^m u\|_{\frac nm,q}^q -C(n,m,\frac nm)^q \|u\|_{\frac nm,q}^q \leq 1} \int_{\B^n} \exp\big(\beta_{n,m}^{\frac q{q-1}} |u|^{\frac q{q-1}}\big) dx \leqs \infty.
\end{equation}
\end{theorem}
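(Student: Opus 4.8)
The plan is to deduce the Hardy--Adams inequality in the Euclidean ball $\B^n$ from the improved Adams inequality in the hyperbolic space (Theorem~\ref{MAINII}) by exploiting the conformal identification between $\B^n$ equipped with the Euclidean metric and $\H^n$ realized as the Poincar\'e ball. Recall that in the Poincar\'e ball model the hyperbolic metric is $g = \big(\tfrac{2}{1-|x|^2}\big)^2 (dx\otimes dx)$, so that $dV_g = \big(\tfrac{2}{1-|x|^2}\big)^n dx$ on $\B^n$. The exponential integrand on the left of \eqref{eq:HAineq} is taken with respect to Lebesgue measure $dx$, whereas the integrand in \eqref{eq:improvedAL} is taken with respect to $dV_g$; since $dx \leq dV_g$ pointwise on $\B^n$ (indeed $\tfrac{2}{1-|x|^2}\geq 1$), and since $\Phi_{\frac nm,q}(t)\geq \frac{t^{j}}{j!}$ for the top-order term whenever $t\geq 0$, it suffices to control
\[
\int_{\B^n} \exp\big(\beta_{n,m}^{\frac q{q-1}}|u|^{\frac q{q-1}}\big)\,\frac{dx}{dV_g/dx}
\]
in terms of the hyperbolic quantity, up to lower-order polynomial corrections that are harmless on the constraint set. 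More precisely, I would first split $\B^n$ into the region $\{|x|\leq 1/2\}$, where the two measures are comparable and $\exp$ differs from $\Phi_{\frac nm,q}$ only by a uniformly bounded additive constant times the measure of a fixed compact set, and the region $\{1/2<|x|<1\}$, where $dx/dV_g = \big(\tfrac{1-|x|^2}{2}\big)^n\to 0$; on the latter annular region one absorbs the missing weight into the decay of $dx/dV_g$, reducing to $\int_{\B^n}\Phi_{\frac nm,q}\big(\beta_{n,m}^{q/(q-1)}|u|^{q/(q-1)}\big)\,dV_g<\infty$.

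The second ingredient is the conformal invariance of the $m$-th order Dirichlet energy under the Lorentz norm, or rather the precise relation between $\|\na^m u\|_{L^{n/m,q}(\B^n,dx)}$ and $\|\na_g^m u\|_{n/m,q}$: in the borderline exponent $p=n/m$ the leading-order part of the polyharmonic operator transforms conformally, and the identification $W^mL^{n/m,q}(\B^n)\simeq W^mL^{n/m,q}(\H^n)$ (the completion being taken of the same space $C_0^\infty(\B^n)$) is exactly the statement that $u\mapsto u$ is an isomorphism of normed spaces up to the constant $C(n,m,n/m)^q$ Poincar\'e deficit. Concretely, one needs the pointwise/operator identity expressing $\na_g^m u$ in the ball model in terms of $\na^m u$ and lower-order terms with the conformal factor, and then the key algebraic fact that
\[
\|\na_g^m u\|_{\frac nm,q}^q - C\big(n,m,\tfrac nm\big)^q\|u\|_{\frac nm,q}^q \;\leq\; \int_{\B^n}|\na^m u|^{\frac nm}\,dx
\]
whenever $u\in C_0^\infty(\B^n)$, under the hypotheses $m\geq 3$, $n\geq 2m+1$ and the stated range of $q$. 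This is where the dimension restriction $n\geq 2m+1$ enters: it guarantees that all the lower-order curvature/conformal correction terms appearing in the expansion of $\Delta_g^{m/2}$ (resp.\ $\na_g\Delta_g^{(m-1)/2}$) can be bounded by $C(n,m,n/m)^q\|u\|_{n/m,q}^q$ rather than spoiling the inequality, using the sharp Lorentz--Poincar\'e inequality \eqref{eq:Poincare} together with the intermediate-order Hardy--Rellich-type inequalities in $\H^n$. I would assemble this comparison by induction on the order, handling the even and odd cases separately exactly as in Theorem~\ref{MAINII}, which is why the extra hypothesis $q\leq n/m$ for $m$ odd is again imposed.

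Granting the two ingredients, the argument concludes quickly: if $u\in C_0^\infty(\B^n)$ satisfies the constraint in \eqref{eq:HAineq}, namely $\|\na_g^m u\|_{n/m,q}^q - C(n,m,n/m)^q\|u\|_{n/m,q}^q\leq 1$, then by the conformal comparison $\int_{\B^n}|\na^m u|^{n/m}\,dx\geq$ (that deficit is $\leq 1$), hence in particular $\|\na_g^m u\|_{n/m,q}\leq 1$ automatically fails to be what we want --- instead we invoke Theorem~\ref{MAINII} directly with $\lambda = C(n,m,n/m)^q$, which is admissible since the theorem allows any $\lambda < C(n,m,n/m)^q$ and a standard limiting/scaling argument (replacing $u$ by $(1+\varepsilon)^{-1}u$ and letting $\varepsilon\to 0^+$, or more carefully running the supremum over $\lambda\uparrow C(n,m,n/m)^q$ and noting the bound is uniform) upgrades it to the endpoint value on the constraint set. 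This yields $\int_{\B^n}\Phi_{\frac nm,q}\big(\beta_{n,m}^{q/(q-1)}|u|^{q/(q-1)}\big)\,dV_g\leq \infty$, and then the measure-comparison step of the first paragraph converts this into the finiteness of $\int_{\B^n}\exp\big(\beta_{n,m}^{q/(q-1)}|u|^{q/(q-1)}\big)\,dx$. Density of $C_0^\infty(\B^n)$ in $W^mL^{n/m,q}(\H^n)$ and Fatou's lemma extend the bound to all admissible $u$.

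The main obstacle I anticipate is the conformal comparison of the second paragraph: proving that the hyperbolic Lorentz energy minus the sharp Poincar\'e term dominates the Euclidean Lorentz energy on $\B^n$, with all conformal-factor corrections absorbed. This is delicate because in the Lorentz setting one does not have the bilinear/Hilbertian expansions available to Lu--Yang and Li--Lu--Yang in the $q=2$, $p=2$ case, so one must track the rearrangement-based estimates of Section~\S2 through the conformal change of variables. Getting the endpoint value $\lambda = C(n,m,n/m)^q$ (rather than $\lambda$ strictly below it) is a secondary technical point, resolved by the non-attainment of the sharp constant in \eqref{eq:Poincare} combined with a routine exhaustion argument, but it must be stated carefully to avoid a circular use of Theorem~\ref{MAINII}.
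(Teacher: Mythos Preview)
Your proposal has a genuine gap at the step where you try to invoke Theorem~\ref{MAINII} at the endpoint $\lambda = C(n,m,\tfrac nm)^q$. The limiting argument you sketch (replace $u$ by $(1+\varepsilon)^{-1}u$, or let $\lambda\uparrow C^q$) does not work, because the constant in Theorem~\ref{MAINII} is \emph{not} uniform as $\lambda$ approaches the critical value. If you trace through the proof of Theorem~\ref{MAINII} in the base case $m=2$, the final bound comes out proportional to $\tau^{-n/(2q)}$ with $\tau = C(n,2,\tfrac n2)^q - \lambda$, and this blows up. Non-attainment of the sharp Poincar\'e constant does not help here: the supremum over admissible $u$ genuinely diverges in \eqref{eq:improvedAL} when $\lambda = C^q$, so no exhaustion argument will recover a finite bound on $\int_{\H^n}\Phi_{\frac nm,q}(\beta_{n,m}^{q'}|u|^{q'})\,dV_g$ at the endpoint.

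Your second ingredient --- the conformal comparison between the Euclidean and hyperbolic $m$-th order Lorentz energies --- is also misdirected: the constraint in \eqref{eq:HAineq} is already purely hyperbolic, and no Euclidean Sobolev norm enters the statement or the proof. The paper's route bypasses both of these issues. Instead of pushing Theorem~\ref{MAINII} to the endpoint, it uses the sharp Poincar\'e--Sobolev inequalities \eqref{eq:improvedLS1a} and \eqref{eq:improvedLS2a} (together with \eqref{eq:Poincare}) to show that the endpoint constraint $\|\na_g^m u\|_{\frac nm,q}^q - C(n,m,\tfrac nm)^q\|u\|_{\frac nm,q}^q \leq 1$ implies a \emph{lower-order} constraint with \emph{no} deficit, namely $\|\na_g^{m-2}u\|_{\frac n{m-2},q}\leq \text{const}$ (even $m$) or $\|\Delta_g^{(m-1)/2}u\|_{\frac n{m-1},q}\leq \text{const}$ (odd $m$). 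One then applies Theorem~\ref{MAINI} --- not Theorem~\ref{MAINII} --- at order $m-2$ (resp.\ $m-1$) to obtain a bound on $\int_{\H^n}\Phi_{\frac n{m-2},q}(\beta_{n,m}^{q'}|u|^{q'})\,dV_g$. This explains the hypothesis $m\geq 3$: one needs room to drop at least one full step in the iteration and still land in the range where Theorem~\ref{MAINI} applies. The passage from $dV_g$ to $dx$ and from $\Phi$ to $\exp$ is then handled by the Hardy--Littlewood inequality with the radial decreasing weight $h(x)=(1-|x|^2)^n$, a rearrangement-level splitting at $t=1$, and the pointwise decay $u^*(t)\leq Ct^{-1/(q'(j-1))}$ that follows from the lower-order $\Phi$-bound. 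Your first ingredient (the $dx$ versus $dV_g$ comparison) is in the right spirit, but it only becomes effective once one has this lower-order input rather than the unattainable endpoint of Theorem~\ref{MAINII}.
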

Notice that the condition $m \geq 3$ is crucial in our approach. Indeed, under this condition we can make some estimates for $\|\na_g^m u\|_{\frac nm,q}^q -C(n,m,\frac nm)^q \|u\|_{\frac nm,q}^q$ for which we can apply the results from Theorem \ref{MAINI} and Theorem \ref{MAINII}. We do not know an analogue of \eqref{eq:HAineq} when $m=2$. When $q = \frac nm$, we obtain the following Hardy--Adams inequality
\begin{equation*}
\sup_{u\in W^{m,\frac nm}_0(\H^n),\, \int_{\H^n} |\na_g^m u|^{\frac nm} dV_g -C(n,m,\frac nm)^{\frac nm} \int_{\H^n} |u|^{\frac nm} dV_g \leq 1} \int_{\B^n} \exp\big(\al_{n,m} |u|^{\frac n{n-m}}\big) dx \leqs \infty.
\end{equation*}

The rest of this paper is organized as follows. In Section \S2, we recall some facts on the hyperbolic spaces, the non-increasing rearrangement argument in the hyperbolic spaces and some important results from \cite{Nguyen2020b} which are used in the proof of Theorem \ref{MAINII} and Theorem \ref{HARDYADAMS}. The proof of Theorem \ref{MAINI} is given in Section \S3. Section \S4 is devoted to prove Theorem \ref{MAINII}. Finally, in Section \S5 we provide the proof of Theorem \ref{HARDYADAMS}.

\section{Preliminaries}
We start this section by briefly recalling some basis facts on the hyperbolic spaces and the Lorentz--Sobolev space defined in the hyperbolic spaces. Let $n\geq 2$, a hyperbolic space of dimension $n$ (denoted by $\H^n$) is a complete , simply connected Riemannian manifold having constant sectional curvature $-1$. There are several models for the hyperbolic space $\H^n$ such as the half-space model, the hyperboloid (or Lorentz) model and the Poincar\'e ball model. Notice that all these models are Riemannian isometry. In this paper, we are interested in the Poincar\'e ball model of the hyperbolic space since this model is very useful for questions involving rotational symmetry. In the Poincar\'e ball model, the hyperbolic space $\H^n$ is the open unit ball $B_n\subset \R^n$ equipped with the Riemannian metric
\[
g(x) = \Big(\frac2{1- |x|^2}\Big)^2 dx \otimes dx.
\]
The volume element of $\H^n$ with respect to the metric $g$ is given by
\[
dV_g(x) = \Big(\frac 2{1 -|x|^2}\Big)^n dx,
\]
where $dx$ is the usual Lebesgue measure in $\R^n$. For $x \in B_n$, let $d(0,x)$ denote the geodesic distance between $x$ and the origin, then we have $d(0,x) = \ln (1+|x|)/(1 -|x|)$. For $\rho \geqs 0$, $B(0,\rho)$ denote the geodesic ball with center at origin and radius $\rho$. If we denote by $\na$ and $\Delta$ the Euclidean gradient and Euclidean Laplacian, respectively as well as $\la \cdot, \cdot\ra$ the standard scalar product in $\R^n$, then the hyperbolic gradient $\na_g$ and the Laplace--Beltrami operator $\Delta_g$ in $\H^n$ with respect to metric $g$ are given by
\[
\na_g = \Big(\frac{1 -|x|^2}2\Big)^2 \na,\quad \Delta_g = \Big(\frac{1 -|x|^2}2\Big)^2 \Delta + (n-2) \Big(\frac{1 -|x|^2}2\Big)\la x, \na \ra,
\]
respectively. For a function $u$, we shall denote $\sqrt{g(\na_g u, \na_g u)}$ by $|\na_g u|_g$ for simplifying the notation. Finally, for a radial function $u$ (i.e., the function depends only on $d(0,x)$) we have the following polar coordinate formula
\begin{equation*}
\int_{\H^n} u(x) dx = n \sigma_n \int_0^\infty u(\rho) \sinh^{n-1}(\rho)\,  d\rho.
\end{equation*}

It is now known that the symmetrization argument works well in the setting of the hyperbolic. It is the key tool in the proof of several important inequalities such as the Poincar\'e inequality, the Sobolev inequality, the Moser--Trudinger inequality in $\H^n$. We shall see that this argument is also the key tool to establish the main results in the present paper. Let us recall some facts about the rearrangement argument in the hyperbolic space $\H^n$. A measurable function $u:\H^n \to \R$ is called vanishing at the infinity if for any $t >0$ the set $\{|u| > t\}$ has finite $V_g-$measure, i.e.,
\[
V_g(\{|u|> t\}) = \int_{\{|u|> t\}} dV_g < \infty.
\]
For such a function $u$, its distribution function is defined by
\[
\mu_u(t) = V_g( \{|u|> t\}).
\]
Notice that $t \to \mu_u(t)$ is non-increasing and right-continuous. The non-increasing rearrangement function $u^*$ of $u$ is defined by
\[
u^*(t) = \sup\{s > 0\, :\, \mu_u(s) > t\}.
\] 
The non-increasing, spherical symmetry, rearrangement function $u^\sharp$ of $u$ is defined by
\[
u^\sharp(x) = u^*(V_g(B(0,d(0,x)))),\quad x \in \H^n.
\]
It is well-known that $u$ and $u^\sharp$ have the same non-increasing rearrangement function (which is $u^*$). Finally, the maximal function $u^{**}$ of $u^*$ is defined by
\[
u^{**}(t) = \frac1t \int_0^t u^*(s) ds.
\]
Evidently, $u^*(t) \leq u^{**}(t)$.

For $1\leq p, q < \infty$, the Lorentz space $L^{p,q}(\H^n)$ is defined as the set of all measurable function $u: \H^n \to \R$ satisfying
\[
\|u\|_{L^{p,q}(\H^n)}: = \lt(\int_0^\infty \lt(t^{\frac1p} u^*(t)\rt)^q \frac{dt}t\rt)^{\frac1q} < \infty.
\]
It is clear that $L^{p,p}(\H^n) = L^p(\H^n)$. Moreover, the Lorentz spaces are monotone with respect to second exponent, namely
\[
L^{p,q_1}(\H^n) \subsetneq L^{p,q_2}(\H^n),\quad 1\leq q_1 < q_2 < \infty.
\]
The functional $ u\to \|u\|_{L^{p,q}(\H^n)}$ is not a norm in $L^{p,q}(\H^n)$ except the case $q \leq p$ (see \cite[Chapter $4$, Theorem $4.3$]{Bennett}). In general, it is a quasi-norm which  turns out to be equivalent to the norm obtained replacing $u^*$ by its maximal function $u^{**}$ in the definition of $\|\cdot\|_{L^{p,q}(\H^n)}$. Moreover, as a consequence of Hardy inequality, we have
\begin{proposition}
Given $p\in (1,\infty)$ and $q \in [1,\infty)$. Then for any function $u \in L^{p,q}(\H^n)$ it holds 
\begin{equation}\label{eq:Hardy}
\lt(\int_0^\infty \lt(t^{\frac1p} u^{**}(t)\rt)^q \frac{dt}t\rt)^{\frac1q} \leq \frac p{p-1} \lt(\int_0^\infty \lt(t^{\frac1p} u^*(t)\rt)^q \frac{dt}t\rt)^{\frac1q} = \frac p{p-1} \|u\|_{L^{p,q}(\H^n)}.
\end{equation}
\end{proposition}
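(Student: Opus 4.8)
The plan is to reduce the claimed inequality to the classical one-dimensional Hardy inequality on $(0,\infty)$ with the weight $dt/t$. Recall that $u^{**}(t) = \tfrac1t\int_0^t u^*(s)\,ds$, so the quantity we must bound is $\bigl(\int_0^\infty (t^{1/p} u^{**}(t))^q \,\tfrac{dt}{t}\bigr)^{1/q}$. Substituting the definition of $u^{**}$ and pulling the factor $t^{1/p-1}$ inside, this equals $\bigl(\int_0^\infty \bigl(t^{1/p - 1}\int_0^t u^*(s)\,ds\bigr)^q\,\tfrac{dt}{t}\bigr)^{1/q}$. Writing $\alpha = 1 - 1/p \in (0,1)$ since $p \in (1,\infty)$, the integrand is $t^{-\alpha q}\bigl(\int_0^t u^*(s)\,ds\bigr)^q \tfrac{dt}{t}$, which is precisely the left-hand side of the classical weighted Hardy inequality $\int_0^\infty \bigl(t^{-\alpha}\int_0^t f(s)\,ds\bigr)^q\,\tfrac{dt}{t} \le \alpha^{-q}\int_0^\infty \bigl(t^{1-\alpha} f(t)\bigr)^q\,\tfrac{dt}{t}$ applied to $f = u^*$, valid for $f \ge 0$, $q \ge 1$, and $\alpha > 0$.

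First I would state the one-dimensional Hardy inequality in the exact form needed: for a nonnegative measurable $f$ on $(0,\infty)$, $q \ge 1$ and $\alpha > 0$,
\[
\lt(\int_0^\infty \lt(t^{-\alpha} \int_0^t f(s)\,ds\rt)^q \frac{dt}{t}\rt)^{1/q} \le \frac1\alpha \lt(\int_0^\infty \lt(t^{1-\alpha} f(t)\rt)^q \frac{dt}{t}\rt)^{1/q}.
\]
This is standard; one proof writes $\int_0^t f(s)\,ds = \int_0^t f(s) s^{\beta} s^{-\beta}\,ds$ with a suitable $\beta \in (0,\alpha)$, applies H\"older's inequality in $s$ to separate a power of $s$ that can be integrated, and then applies Fubini's theorem to the resulting double integral in $(s,t)$; optimizing over $\beta$ gives the constant $1/\alpha$. (When $q = 1$ the argument collapses to a direct application of Fubini.) One may either cite this from a standard reference such as \cite{Bennett} or include the short computation.

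Next I would apply this with $\alpha = 1 - 1/p = 1/p'$, so that $1/\alpha = p' = p/(p-1)$, and with $f(t) = u^*(t)$, which is nonnegative and measurable. Since $t^{-\alpha}\int_0^t u^*(s)\,ds = t^{1/p} \cdot \tfrac1t\int_0^t u^*(s)\,ds = t^{1/p}u^{**}(t)$ and $t^{1-\alpha}u^*(t) = t^{1/p}u^*(t)$, the conclusion reads exactly
\[
\lt(\int_0^\infty \lt(t^{1/p} u^{**}(t)\rt)^q \frac{dt}{t}\rt)^{1/q} \le \frac{p}{p-1}\lt(\int_0^\infty \lt(t^{1/p} u^*(t)\rt)^q \frac{dt}{t}\rt)^{1/q},
\]
and the right-hand side is $\tfrac{p}{p-1}\|u\|_{L^{p,q}(\H^n)}$ by definition of the Lorentz quasi-norm. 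This is the desired inequality \eqref{eq:Hardy}.

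The proof has essentially no obstacle — the only point requiring a little care is establishing the one-dimensional Hardy inequality with the sharp constant $1/\alpha$; the choice of the auxiliary exponent $\beta$ in the H\"older step (any $\beta \in (0,\alpha)$ works, and $\beta \uparrow \alpha$ or a direct optimization recovers $1/\alpha$) is the mildly technical ingredient. Note also that nothing here uses the hyperbolic geometry of $\H^n$: the statement is purely about the rearrangement $u^*$ of a function vanishing at infinity, so the result holds verbatim on any measure space; the hyperbolic setting enters only through the fact that $u^*$ is defined via the $V_g$-distribution function $\mu_u$. One should record that $u \in L^{p,q}(\H^n)$ guarantees the right-hand side is finite, so the manipulations are justified (if it is infinite the inequality is trivial).
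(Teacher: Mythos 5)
Your proposal is correct and takes exactly the route the paper intends: the paper states the proposition as ``a consequence of Hardy inequality'' without spelling out the computation, and your reduction — writing $t^{1/p}u^{**}(t) = t^{-\alpha}\int_0^t u^*(s)\,ds$ with $\alpha = 1/p'$ and invoking the one-dimensional weighted Hardy inequality with sharp constant $1/\alpha = p/(p-1)$ — is precisely that computation made explicit. Your closing remarks (that the statement is purely a rearrangement fact independent of the hyperbolic geometry, and that one may assume the right-hand side finite) are accurate and harmless additions.
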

For $1\leq p, q \leqs \infty$ and an integer $m\geq 1$, we define the $m-$th order Lorentz--Sobolev space $W^mL^{p,q}(\H^n)$ by taking the completion of $C_0^\infty(\H^n)$ under the quasi-norm
\[
\|\na_g^m u\|_{p,q} := \| |\na_g^m u|\|_{p,q}.
\]
It is obvious that $W^mL^{p,p}(\H^n) = W^{m,p}(\H^n)$ the $m-$th order Sobolev space in $\H^n$. In \cite{Nguyen2020a}, the author established the following P\'olya--Szeg\"o principle in the first order Lorenz--Sobolev spaces $W^1L^{p,q}(\H^n)$ which generalizes the classical P\'olya--Szeg\"o principle in the hyperbolic space.
\begin{theorem}\label{PS}
Let $n\geq 2$, $1\leq q \leq p \leqs \infty$ and $u\in W^{1}L^{p,q}(\H^n)$. Then $u^\sharp \in W^{1}L^{p,q}(\H^n)$ and 
$$\|\na_g u^\sharp\|_{p,q} \leq \|\na_g u\|_{p,q}.$$
\end{theorem}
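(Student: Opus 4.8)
The plan is to prove the inequality for nonnegative $u\in C_0^\infty(\H^n)$ and then obtain the general case of the completion $W^{1}L^{p,q}(\H^n)$ by approximation. Since $p<\infty$, every element $u$ of this completion vanishes at infinity — the Poincar\'e inequality \eqref{eq:Poincare} bounds $\|u\|_{p,q}$ by $\|\na_g u\|_{p,q}$ — so $\mu_u$, $u^*$ and $u^\sharp$ are well defined. If $\phi_k\in C_0^\infty(\H^n)$ with $\phi_k\to u$ in $W^1L^{p,q}(\H^n)$, then $\phi_k\to u$ in $L^{p,q}(\H^n)$, hence $\phi_k^\sharp\to u^\sharp$; combined with the lower semicontinuity of the Lorentz gradient (quasi-)norm (for the radial functions $\phi_k^\sharp$ this is immediate from the explicit formula below and Fatou), the inequality for the $\phi_k$'s passes to $u$, and finiteness of $\|\na_g u^\sharp\|_{p,q}$ together with a truncation–mollification argument for radial functions yields $u^\sharp\in W^1L^{p,q}(\H^n)$. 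Replacing $u$ by $|u|$, it remains to treat $0\le u\in C_0^\infty(\H^n)$.

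First I would record the radial formula for the symmetrization. Let $\Phi(r)=V_g(B(0,r))$, $P(r)=\Phi'(r)=n\sigma_n\sinh^{n-1}r$, and let $\mathcal I(s)=P(\Phi^{-1}(s))$ be the perimeter of the geodesic ball of volume $s$ (which, by the classical isoperimetric inequality in $\H^n$, is the isoperimetric profile). Using $|\na_g d(0,\cdot)|_g\equiv1$ one gets $|\na_g u^\sharp|_g(x)=F(\Phi(d(0,x)))$, where $F(s):=\big(-(u^*)'(s)\big)\mathcal I(s)$; and since $x\mapsto\Phi(d(0,x))$ pushes $V_g$ forward to Lebesgue measure on $(0,\infty)$, the decreasing rearrangement of $|\na_g u^\sharp|_g$ is exactly $F^*$, so that $\|\na_g u^\sharp\|_{p,q}^q=\int_0^\infty s^{q/p-1}\big(F^*(s)\big)^q\,ds$.

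The heart of the matter is the pointwise bound
\[
F(s)^q\ \le\ \big(-(u^*)'(s)\big)\int_{\{u=u^*(s)\}}|\na_g u|_g^{q-1}\,d\mathcal H^{n-1}\qquad\text{for a.e. }s>0,
\]
$\mathcal H^{n-1}$ denoting the $(n-1)$-dimensional Hausdorff measure on $\H^n$. To get it I would work at a regular level $t=u^*(s)$ and combine the coarea identities $-\mu_u'(t)=\int_{\{u=t\}}|\na_g u|_g^{-1}\,d\mathcal H^{n-1}$ and $\mathrm{Per}(\{u>t\})=\mathcal H^{n-1}(\{u=t\})$, the isoperimetric inequality $\mathrm{Per}(\{u>t\})\ge\mathcal I(\mu_u(t))=\mathcal I(s)$, and H\"older's inequality with exponents $q,\ q/(q-1)$ applied to $1=|\na_g u|_g^{(q-1)/q}\cdot|\na_g u|_g^{-(q-1)/q}$ on $\{u=t\}$; together with $-(u^*)'(s)=1/(-\mu_u'(u^*(s)))$ these collapse to the displayed estimate. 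Integrating it over an arbitrary measurable $A\subseteq(0,\infty)$, changing variables $t=u^*(s)$ (i.e.\ $s=\mu_u(t)$), and applying the coarea formula once more gives $\int_A F(s)^q\,ds\le\int_{u^{-1}(u^*(A))}|\na_g u|_g^q\,dV_g$ with $V_g\big(u^{-1}(u^*(A))\big)=|A|$, and the Hardy--Littlewood inequality bounds the right-hand side by $\int_0^{|A|}\big((|\na_g u|_g)^*(\rho)\big)^q\,d\rho$. Taking the supremum over all $A$ of measure $s$, for which superlevel sets are optimal, produces the weak majorization
\[
\int_0^s\big(F^*(\rho)\big)^q\,d\rho\ \le\ \int_0^s\big((|\na_g u|_g)^*(\rho)\big)^q\,d\rho\qquad\text{for all }s>0.
\]
Finally, since $q\le p$ the weight $\rho\mapsto\rho^{q/p-1}$ is non-increasing, so Hardy's lemma (see \cite{Bennett}) upgrades this to $\int_0^\infty s^{q/p-1}(F^*(s))^q\,ds\le\int_0^\infty s^{q/p-1}((|\na_g u|_g)^*(s))^q\,ds$, i.e.\ $\|\na_g u^\sharp\|_{p,q}\le\|\na_g u\|_{p,q}$.

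The hard part is not the inequality chain but its rigorous justification: the coarea manipulations and the change of variables $t=u^*(s)$ require the usual care with Sard's theorem and absolute continuity (regular levels of $u$, the intervals where $\mu_u$ or $u^*$ is constant, and the set $\{\na_g u=0\}$), and the passage from smooth compactly supported functions to a general element of the completion must be arranged so that the Lorentz gradient norm of the symmetrizations is lower semicontinuous. Structurally, the proof relies on exactly two things it cannot avoid: that geodesic balls are the isoperimetric regions of $\H^n$, and the hypothesis $q\le p$, which is precisely what makes the weight in Hardy's lemma non-increasing.
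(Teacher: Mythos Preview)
The paper does not prove this theorem: it is quoted from \cite{Nguyen2020a} as a known result and used as a tool, so there is no ``paper's own proof'' here to compare against.

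On its own merits your outline is the standard route to a Lorentz--space P\'olya--Szeg\H{o} principle and is essentially correct. The chain coarea $+$ isoperimetric inequality in $\H^n$ $+$ H\"older on level sets does yield the pointwise bound $F(s)^q\le(-(u^*)'(s))\int_{\{u=u^*(s)\}}|\na_g u|_g^{\,q-1}\,d\mathcal H^{n-1}$, and integrating over measurable $A$ followed by Hardy--Littlewood gives the weak majorization of $(F^*)^q$ by $((|\na_g u|_g)^*)^q$; then Hardy's lemma with the non-increasing weight $s^{q/p-1}$ (this is exactly where $q\le p$ enters) finishes the inequality for smooth compactly supported $u$. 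Your identification of $(|\na_g u^\sharp|_g)^*$ with $F^*$ via the pushforward of $V_g$ by $x\mapsto\Phi(d(0,x))$ is clean. The only point deserving more than a sentence is the measure bookkeeping at levels where $\na_g u$ vanishes or $\mu_u$ has plateaux, so that the change of variables $t=u^*(s)$ and the equality $V_g(u^{-1}(u^*(A)))=|A|$ hold up to null sets that do not contribute to the gradient integrals; you flag this yourself. This is almost certainly the same architecture as in \cite{Nguyen2020a}, since it is the natural adaptation to $\H^n$ of the Euclidean argument (Alvino--Lions--Trombetti style) for Lorentz P\'olya--Szeg\H{o}.
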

For $r \geq 0$, define
\[
\Phi(r) = n \int_0^r \sinh^{n-1}(s) ds, \quad r\geq 0,
\]
and let $F$ be the function such that 
\[
r = n \si_n \int_0^{F(r)} \sinh^{n-1}(s) ds, \quad r\geq 0,
\]
i.e., $F(r) = \Phi^{-1}(r/\si_n)$. 

The following results was proved in \cite{Nguyen2020b} (see the Section \S2).
\begin{proposition}
Let $n \geq 2$. Then it holds
\begin{equation}\label{eq:keyyeu}
\sinh^{n}(F(t)) \geqs \frac t{\si_n},\quad t\geqs 0.
\end{equation}
Furthermore, the function
\[
\vphi(t) =\frac{t}{\sinh^{n-1}(F(t))}
\]
is strictly increasing on $(0,\infty)$, and 
\begin{equation}\label{eq:keyyeu*}
\lim_{t\to \infty} \varphi(t) = \frac{n \si_n}{n-1} > \frac{t}{\sinh^{n-1}(F(t))},\quad t >0.
\end{equation}
\end{proposition}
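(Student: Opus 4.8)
The plan is to reduce everything to the single-variable function $\Phi(r)=n\int_0^r\sinh^{n-1}(s)\,ds$ and its inverse, and then to exploit elementary convexity/comparison estimates for $\sinh$. First I would record the basic facts: $\Phi$ is a strictly increasing $C^\infty$ bijection of $[0,\infty)$ onto itself with $\Phi'(r)=n\sinh^{n-1}(r)$, so $F(r)=\Phi^{-1}(r/\sigma_n)$ is strictly increasing, $F(0)=0$, $F(r)\to\infty$ as $r\to\infty$, and $r=\sigma_n\Phi(F(r))=n\sigma_n\int_0^{F(r)}\sinh^{n-1}(s)\,ds$. Differentiating this identity gives the useful formula $F'(r)=\dfrac{1}{n\sigma_n\sinh^{n-1}(F(r))}$, which will drive both remaining claims.

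For the inequality \eqref{eq:keyyeu}, i.e. $\sinh^n(F(t))\ge t/\sigma_n$ for $t\ge 0$, I would argue by comparison at the level of $\Phi$. Set $r=F(t)$, so $t=\sigma_n\Phi(r)$ and the claim becomes $\sinh^n(r)\ge \Phi(r)=n\int_0^r\sinh^{n-1}(s)\,ds$ for all $r\ge 0$. Let $H(r)=\sinh^n(r)-n\int_0^r\sinh^{n-1}(s)\,ds$. Then $H(0)=0$ and $H'(r)=n\sinh^{n-1}(r)\cosh(r)-n\sinh^{n-1}(r)=n\sinh^{n-1}(r)\big(\cosh(r)-1\big)\ge 0$ for $r\ge 0$, with strict inequality for $r>0$; hence $H(r)>0$ for $r>0$ and $H(0)=0$, giving the strict inequality claimed for $t>0$. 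This is clean and requires no real obstacle.

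For the statements about $\varphi(t)=t/\sinh^{n-1}(F(t))$, I would again substitute $r=F(t)$, so that $t=\sigma_n\Phi(r)$ and
\[
\varphi(t)=\sigma_n\,\frac{\Phi(r)}{\sinh^{n-1}(r)}=\sigma_n\,\frac{n\int_0^r\sinh^{n-1}(s)\,ds}{\sinh^{n-1}(r)}=:\sigma_n\,\psi(r).
\]
Since $r\mapsto F(t)$ is an increasing bijection, monotonicity of $\varphi$ in $t$ is equivalent to monotonicity of $\psi$ in $r$, and I would show $\psi'(r)>0$. Using the quotient rule,
\[
\psi'(r)=\frac{n\sinh^{n-1}(r)\cdot\sinh^{n-1}(r)-\big(n\int_0^r\sinh^{n-1}(s)\,ds\big)(n-1)\sinh^{n-2}(r)\cosh(r)}{\sinh^{2n-2}(r)},
\]
so $\psi'(r)>0$ is equivalent to $\sinh^n(r)>(n-1)\cosh(r)\int_0^r\sinh^{n-1}(s)\,ds$. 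I would prove this via the function $G(r)=\sinh^n(r)-(n-1)\cosh(r)\int_0^r\sinh^{n-1}(s)\,ds$: one has $G(0)=0$ and
\[
G'(r)=n\sinh^{n-1}(r)\cosh(r)-(n-1)\sinh(r)\!\int_0^r\!\sinh^{n-1}(s)\,ds-(n-1)\cosh(r)\sinh^{n-1}(r),
\]
which simplifies to $G'(r)=\sinh^{n-1}(r)\cosh(r)-(n-1)\sinh(r)\int_0^r\sinh^{n-1}(s)\,ds$. Comparing again to $\sinh^{n-1}$, the elementary inequality $\int_0^r\sinh^{n-1}(s)\,ds<\frac{\sinh^{n-1}(r)\cosh(r)}{n-1}$ (proved exactly as $H$ above, or by noting $\frac{d}{ds}\frac{\sinh^{n-1}(s)\cosh(s)}{n-1}=\sinh^{n-2}(s)\cosh^2(s)+\cdots>\sinh^{n-1}(s)$) yields $G'(r)>0$, hence $G>0$ on $(0,\infty)$ and $\psi'>0$; this gives the strict monotonicity of $\varphi$. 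Finally, for the limit in \eqref{eq:keyyeu*}, monotonicity ensures the limit exists in $(0,\infty]$ and dominates $\varphi(t)$ for every $t>0$; to evaluate it I would pass to $r\to\infty$ in $\sigma_n\psi(r)$ and apply L'Hôpital (or the asymptotics $\sinh^{n-1}(r)\sim 2^{-(n-1)}e^{(n-1)r}$ and $n\int_0^r\sinh^{n-1}\sim \frac{n}{n-1}\,2^{-(n-1)}e^{(n-1)r}$), obtaining $\lim_{r\to\infty}\psi(r)=\frac{n}{n-1}$, so $\lim_{t\to\infty}\varphi(t)=\frac{n\sigma_n}{n-1}$. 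The strict inequality $\frac{n\sigma_n}{n-1}>\varphi(t)$ for $t>0$ is then immediate from strict monotonicity and the limit.

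The only place demanding any care is the sign analysis for $\psi'$: the naive comparison $\sinh^n\ge\Phi$ from \eqref{eq:keyyeu} is \emph{not} quite enough because of the extra factor $(n-1)\cosh(r)$, so one really needs the sharper bound $\int_0^r\sinh^{n-1}(s)\,ds<\frac{1}{n-1}\sinh^{n-1}(r)\cosh(r)$, which I expect to be the main (though still routine) obstacle; everything else is bookkeeping with monotone auxiliary functions vanishing at $0$.
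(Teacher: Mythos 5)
Your overall strategy is the right one: pull everything back to the single variable $r=F(t)$ using $t=\sigma_n\Phi(r)$, and then control the resulting one-variable expressions via auxiliary functions that vanish at the origin and have positive derivative. The proof of \eqref{eq:keyyeu} via $H(r)=\sinh^n(r)-n\int_0^r\sinh^{n-1}(s)\,ds$ is correct, as are the reduction $\varphi(t)=\sigma_n\psi(F(t))$, the computation of $\psi'$, the reduction of $\psi'>0$ to $G(r)>0$ with $G(r)=\sinh^n(r)-(n-1)\cosh(r)\int_0^r\sinh^{n-1}(s)\,ds$, the simplification $G'(r)=\sinh^{n-1}(r)\cosh(r)-(n-1)\sinh(r)\int_0^r\sinh^{n-1}(s)\,ds$, and the L'H\^opital evaluation of the limit. (For what it is worth, the paper itself gives no proof of this Proposition --- it only cites the preprint \cite{Nguyen2020b} --- so there is nothing in-text to compare against; the question is just whether your argument is self-contained and correct.)

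There is, however, a genuine gap at the one step you flag as ``the main obstacle.'' You claim that the bound
$\int_0^r\sinh^{n-1}(s)\,ds<\frac{1}{n-1}\sinh^{n-1}(r)\cosh(r)$
yields $G'(r)>0$. It does not. Inserting it into the displayed $G'$ gives only
$G'(r)>\sinh^{n-1}(r)\cosh(r)-\sinh^{n}(r)\cosh(r)=\sinh^{n-1}(r)\cosh(r)\bigl(1-\sinh(r)\bigr)$,
which is negative as soon as $\sinh(r)>1$. The inequality you wrote is true, but it carries a superfluous factor $\cosh(r)$ that makes it too weak. What is actually needed (and what makes the argument close) is the tighter bound
$\int_0^r\sinh^{n-1}(s)\,ds<\frac{1}{\,n-1\,}\sinh^{n-1}(r)$,
which follows immediately from $\sinh^{n-1}(s)<\sinh^{n-2}(s)\cosh(s)=\frac{1}{n-1}\frac{d}{ds}\sinh^{n-1}(s)$ (since $\sinh<\cosh$) by integrating over $[0,r]$. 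With this,
$G'(r)>\sinh^{n-1}(r)\cosh(r)-\sinh^{n}(r)=\sinh^{n-1}(r)\bigl(\cosh(r)-\sinh(r)\bigr)=\sinh^{n-1}(r)\,e^{-r}>0$,
and everything else you wrote (the monotonicity of $\psi$ and hence $\varphi$, the limit $\frac{n\sigma_n}{n-1}$, the strict inequality in \eqref{eq:keyyeu*}) goes through unchanged. So the plan is sound; the specific comparison lemma must be sharpened by dropping the $\cosh(r)$ factor.
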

It should be remark that under an extra condition $q \geq \frac{2n}{n-1}$, a stronger estimate which combines both \eqref{eq:keyyeu} and \eqref{eq:keyyeu*} was established by the author in \cite[Lemma $2.1$]{Nguyen2020a} that
\begin{equation*}
\sinh^{q(n-1)}(F(t)) \geq \lt(\frac t{\si_n}\rt)^{q \frac{n-1}n} + \lt(\frac{n-1}n\rt)^q \lt(\frac t{\si_n}\rt)^q,\quad t \geqs 0.
\end{equation*}

Let $u \in C_0^\infty(\H^n)$ and $f = -\Delta_g u$. It was proved by Ngo and the author (see \cite[Proposition $2.2$]{NgoNguyenAMV}) that
\begin{equation}\label{eq:NgoNguyen}
u^*(t) \leq v(t):= \int_t^\infty \frac{s f^{**}(s)}{(n \si_n \sinh^{n-1}(F(s)))^2} ds,\quad t\geqs 0.
\end{equation}

The following results which were proved in \cite{Nguyen2020b,Nguyen2020a} play the important role in the proof of our main results,
\begin{proposition}\label{L1}
Let $p\in (1,n)$ and $\frac{2n}{n-1} \leq q \leq p$. Then we have
\begin{equation}\label{eq:improvedLS1a}
\|\na_g u\|_{p,q}^q - \lt(\frac{n-1}p\rt)^q \|u\|_{p,q}^q \geq \lt(\frac{n-p}p \si_n^{\frac1n}\rt)^q \|u\|_{p^*,q}^q,\quad u\in C_0^\infty(\H^n)
\end{equation}
where $p' = p/(p-1)$,
\end{proposition}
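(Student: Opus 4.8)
The plan is to reduce the inequality to a one-dimensional Hardy-type inequality on $(0,\infty)$ via the rearrangement machinery recalled above, and then to exploit the stronger pointwise estimate
\[
\sinh^{q(n-1)}(F(t)) \geq \Big(\frac t{\si_n}\Big)^{q\frac{n-1}n} + \Big(\frac{n-1}n\Big)^q \Big(\frac t{\si_n}\Big)^q,\qquad t\geqs 0,
\]
which is exactly the place where the hypothesis $q \geq \tfrac{2n}{n-1}$ enters. First I would assume $u = u^\sharp$ is already spherically symmetric and non-increasing: this is legitimate because the P\'olya--Szeg\"o principle (Theorem \ref{PS}, applicable since $q\leq p$) does not increase $\|\na_g u\|_{p,q}$, while $\|u\|_{p,q}$ and $\|u\|_{p^*,q}$ depend only on $u^*$ and hence are unchanged under symmetrization. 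For such a $u$, writing things in terms of the rearrangement, one has $|\na_g u|^*(t) = -u^{*\prime}(t)\, n\si_n \sinh^{n-1}(F(t))$ (this is the standard hyperbolic co-area/rearrangement identity, already implicit in \eqref{eq:NgoNguyen}), so that
\[
\|\na_g u\|_{p,q}^q = \int_0^\infty \Big( t^{1/p}\, (-u^{*\prime}(t))\, n\si_n \sinh^{n-1}(F(t)) \Big)^q \frac{dt}{t},
\]
while $\|u\|_{p,q}^q = \int_0^\infty (t^{1/p} u^*(t))^q\,\frac{dt}{t}$ and $\|u\|_{p^*,q}^q = \int_0^\infty (t^{1/p^*} u^*(t))^q\,\frac{dt}{t}$ with $\tfrac1{p^*} = \tfrac1p - \tfrac1n$.

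Next I would insert the displayed lower bound for $\sinh^{q(n-1)}(F(t))$ into the gradient term. Using $n\si_n\sinh^{n-1}(F(t)) \geq (n\si_n)\cdot[(t/\si_n)^{(n-1)/n}]$ from \eqref{eq:keyyeu} together with the additive refinement, one gets, after rearranging the powers of $t$,
\[
\big(t^{1/p}(-u^{*\prime})\, n\si_n\sinh^{n-1}F\big)^q \;\geq\; \big(n\si_n^{1/n}\big)^q\, t^{q/p^*}(-u^{*\prime})^q \;+\; (n-1)^q\, t^{q/p}(-u^{*\prime})^q.
\]
Integrating against $dt/t$, the first resulting term is handled by the one-dimensional Bliss/Hardy-Sobolev inequality
\[
\int_0^\infty \big(t^{1/p^*}(-u^{*\prime}(t))\big)^q\,\frac{dt}{t}\;\cdot\; \Big(\tfrac{n}{n-p}\Big)^q \si_n^{q/n}\cdot(\text{const})\;\geq\;\|u\|_{p^*,q}^q
\]
— more precisely the sharp weighted Hardy inequality $\int_0^\infty t^{q/p^*-1}(-w'(t))^q dt \geq (\tfrac{n-p}{p})^q \si_n^{q/n}\cdot(\tfrac1{\si_n})^{q/n}\cdots$ giving the constant $\big(\tfrac{n-p}{p}\si_n^{1/n}\big)^q$ in front of $\|u\|_{p^*,q}^q$; this is the Euclidean Lorentz--Sobolev inequality for radial functions, which is classical (Alvino--Ferone--Trombetti / Talenti). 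The second term, $\int_0^\infty t^{q/p-1}(n-1)^q(-u^{*\prime})^q\,dt$, is bounded below by $(\tfrac{n-1}{p})^q\int_0^\infty t^{q/p-1}(u^*)^q\,dt = (\tfrac{n-1}{p})^q\|u\|_{p,q}^q$ by the one-dimensional Hardy inequality $\int_0^\infty t^{q/p-1}(-w')^q\,dt \geq (\tfrac1{p'})^q\cdot(\tfrac1p)^{-q}\cdots$ — again the sharp Hardy constant, noting $(n-1)/p$ is precisely $C(n,1,p)$. Adding the two contributions yields \eqref{eq:improvedLS1a}.

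The main obstacle I anticipate is bookkeeping of the sharp constants: one must check that the exponent $q/p^* - 1$ appearing in the first one-dimensional integral is in the admissible range for the weighted Hardy inequality (this needs $p < n$, which is assumed) and that the constant coming out of that inequality is exactly $\big(\tfrac{n-p}{p}\si_n^{1/n}\big)^q$ and not something weaker — in particular the factor $\si_n^{q/n}$ has to be tracked carefully through the substitution $t\mapsto V_g(B(0,\cdot))$ and through the normalization in \eqref{eq:keyyeu}. A secondary subtlety is justifying the rearrangement identity for $|\na_g u|^*$ and the termwise splitting of the $q$-th power, which uses only the elementary inequality $(a+b)^{q} \geq$ convexity bound$\cdots$ — here in fact one wants $(A + B) \geq A + B$ at the level of the integrands after the pointwise estimate on $\sinh^{q(n-1)}(F)$, so no loss occurs. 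Once these constant-chasing points are settled, the rest is routine, and the proof is complete modulo the cited one-dimensional Hardy/Bliss inequalities.
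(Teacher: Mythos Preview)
Your approach is essentially the one used in \cite{Nguyen2020a} (which is all the paper invokes for this proposition), and the constant-chasing via two applications of the weighted one-dimensional Hardy inequality is correct: with $a=q/p$ and $a=q/p^*$ the sharp Hardy constant $(q/a)^q$ produces exactly $\big(\tfrac{n-1}{p}\big)^q$ and $\big(\tfrac{n-p}{p}\,\si_n^{1/n}\big)^q$ after the factor $n^q\si_n^{q/n}$ coming from the pointwise estimate is absorbed.

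There is, however, one point that is not an identity and needs to be stated correctly. For $u=u^\sharp$ the function $|\na_g u^\sharp|$ is radial but \emph{not} radially decreasing in general, so the formula $|\na_g u|^*(t) = -u^{*\prime}(t)\, n\si_n \sinh^{n-1}(F(t))$ fails. What is true (and sufficient) is the inequality
\[
\|\na_g u\|_{p,q}^q \;\geq\; \int_0^\infty \big(-(u^*)'(t)\big)^q \big(n\si_n\sinh^{n-1}(F(t))\big)^q\, t^{q/p-1}\,dt,
\]
which is precisely the inequality cited in the paper (``the formula after (2.8)'' from \cite{Nguyen2020a}) in the proof of Theorem~\ref{MAINI}, Case~3. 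The reason the inequality goes the right way is Hardy--Littlewood: writing $G(t)=-(u^*)'(t)\,n\si_n\sinh^{n-1}(F(t))$ for the radial profile of $|\na_g u^\sharp|$, one has $\|\na_g u^\sharp\|_{p,q}^q=\int_0^\infty G^*(s)^q s^{q/p-1}\,ds \geq \int_0^\infty G(t)^q t^{q/p-1}\,dt$ because the weight $t^{q/p-1}$ is decreasing exactly when $q\leq p$, and then $\|\na_g u\|_{p,q}\geq\|\na_g u^\sharp\|_{p,q}$ by Theorem~\ref{PS}. So both hypotheses $q\leq p$ (for P\'olya--Szeg\H{o} and for the Hardy--Littlewood step) and $q\geq\tfrac{2n}{n-1}$ (for the pointwise lower bound on $\sinh^{q(n-1)}(F)$) are genuinely used. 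Once you replace the claimed identity by this inequality, your argument goes through without further change; the ``Bliss/Talenti'' reference is unnecessary, as both terms are handled by the same elementary weighted Hardy inequality on $(0,\infty)$.
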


and
\begin{proposition}\label{L2}
Let $n\geq 2$, $p \in (1,n)$ and $q \in (1,\infty)$. 
If $p \in (1,\frac n2)$ then it holds
\begin{equation}\label{eq:LSorder2}
\|\Delta_g u\|_{p,q}^q \geq \lt(\frac{n(n-2p)}{p p'} \si_n^{\frac 2n}\rt)^q \|u\|_{p_2^*,q}^q.
\end{equation}
If $p\in (1,n)$ and $q \geq \frac{2n}{n-1}$ then we have
\begin{equation}\label{eq:improvedLS2}
\|\Delta_g u\|_{p,q}^q - C(n,2,p)^q \|u\|_{p,q}^q \geq \lt(\frac{n^2 \si_n^{\frac2n}}{p'}\rt)^q \int_0^\infty |v'(t)|^q t^{q(\frac1p -\frac2n) + q -1} dt.
\end{equation}
Furthermore, if $p\in (1,\frac n2)$ and $q \geq \frac{2n}{n-1}$ and $\frac{2n}{n-1} \leq q \leq p$ then we have
\begin{equation}\label{eq:improvedLS2a}
\|\Delta_g u\|_{p,q}^q - C(n,2,p)^q \|u\|_{p,q}^q \geq \lt(\frac{n(n-2p)}{p p'} \si_n^{\frac 2n}\rt)^q \|u\|_{p_2^*,q}^q,\quad u \in C_0^\infty(\H^n).
\end{equation}
\end{proposition}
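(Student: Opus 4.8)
\textbf{Proof proposal for Proposition \ref{L2}.}

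The plan is to reduce all three inequalities to one-dimensional statements via the radial comparison \eqref{eq:NgoNguyen} together with the P\'olya--Szeg\"o principle (Theorem \ref{PS}), and then to invoke the known first-order Lorentz--Sobolev inequalities of Proposition \ref{L1} and the elementary kernel estimates \eqref{eq:keyyeu}--\eqref{eq:keyyeu*}. First I would fix $u \in C_0^\infty(\H^n)$, set $f = -\Delta_g u$, and recall from \eqref{eq:NgoNguyen} that $u^*(t) \le v(t) = \int_t^\infty s f^{**}(s) (n\si_n \sinh^{n-1}(F(s)))^{-2}\, ds$. Thus $v$ is non-increasing, locally absolutely continuous, and $v'(t) = -t f^{**}(t)(n\si_n\sinh^{n-1}(F(t)))^{-2}$; since $u^* \le v$ pointwise, $\|u\|_{r,q} \le \|v\|_{r,q}$ for any $r$ (the Lorentz quasi-norm is monotone in the rearrangement), and similarly $\|\Delta_g u\|_{p,q} = \|f\|_{p,q} \ge \|f^*\|_{p,q}$ with $\|f^{**}(t) t^{1/p}\|_{L^q(dt/t)} \le p' \|f\|_{p,q}$ by the Hardy-type bound \eqref{eq:Hardy}. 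The upshot is that it suffices to prove the corresponding weighted one-dimensional inequalities relating $\int_0^\infty |v'(t)|^q t^{q(1/p - 2/n)+q-1}\,dt$ to $\int_0^\infty (t^{1/p} v(t))^q \, dt/t$ and to $\int_0^\infty(t^{1/r} v(t))^q\, dt/t$ for the relevant $r$, because $|v'(t)| (n\si_n)^2 \sinh^{2(n-1)}(F(t)) = t f^{**}(t)$ and one then bounds $\sinh^{2(n-1)}(F(t))$ from below using \eqref{eq:keyyeu} and \eqref{eq:keyyeu*}.

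For \eqref{eq:LSorder2}: from $v'(t)(n\si_n)^2\sinh^{2(n-1)}(F(t)) = -t f^{**}(t)$ and the bound $\sinh^{n-1}(F(t)) < n\si_n/(n-1)$ (so $\sinh^{2(n-1)}(F(t)) < (n\si_n/(n-1))^2$) I would integrate by parts in $v(t) = -\int_t^\infty v'(s)\, ds$, obtaining an integral operator acting on $f^{**}$ with a power-type kernel after using $\sinh^n(F(s)) > s/\si_n$ from \eqref{eq:keyyeu} to replace $\sinh^{2(n-1)}(F(s))$ by a constant times $(s/\si_n)^{2(n-1)/n}$. This yields exactly the Euclidean-type Riesz potential estimate: $\|u\|_{p_2^*,q} \le C \| \text{(kernel)} * f^{**}\|_{\cdots}$, and a sharp one-dimensional Hardy--Littlewood--Sobolev / Hardy inequality in the Lorentz scale then gives the stated constant $(n(n-2p)p^{-1}p'^{-1}\si_n^{2/n})^q$. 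Alternatively and more cleanly, I would apply Proposition \ref{L1} twice: writing $f = -\Delta_g u$, first bound $\|u\|_{p^*,q}$ in terms of $\|\na_g u\|_{p,q}$ via a first-order Lorentz--Sobolev inequality (the $\lambda = 0$ or Euclidean-type version), then bound $\|\na_g u\|_{p,q}$ — more precisely $\|\na_g u\|_{p^*,q}$ with $(p^*)$ the first Sobolev conjugate — in terms of $\|\Delta_g u\|_{p,q}$; composing the two sharp constants and checking they multiply to the claimed value is the routine endgame. The advantage of the iteration route is that it handles \eqref{eq:improvedLS2a} simultaneously: one uses the \emph{improved} first-order inequality \eqref{eq:improvedLS1a} (valid since $2n/(n-1) \le q \le p$) at each of the two steps, and the subtracted zeroth-order terms telescope to produce $C(n,2,p)^q \|u\|_{p,q}^q$ on the left, using the definition $C(n,2,p) = ((n-1)^2/(pp'))$ and the multiplicativity of the Poincar\'e constants under iteration.

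For the gradient-form bound \eqref{eq:improvedLS2}: here $q \ge 2n/(n-1)$ but we do \emph{not} assume $p < n/2$, so no Sobolev conjugate is available and we must keep the right-hand side in the ``raw'' form $\int_0^\infty |v'(t)|^q t^{q(1/p - 2/n)+q-1}\, dt$. The key point is that $(n^2\si_n^{2/n}/p')^q \int_0^\infty |v'(t)|^q t^{q(1/p-2/n)+q-1}\,dt = (n^2\si_n^{2/n}/p')^q \int_0^\infty (t f^{**}(t))^q (n\si_n)^{-2q}\sinh^{-4(n-1)q/2}(F(t))\cdot t^{q(1/p-2/n)+q-1}\,dt$, and after simplifying the powers of $t$ and $\si_n$ and using $\sinh^n(F(t)) > t/\si_n$ this is bounded above by $\|f^{**}(t) t^{1/p}\|_{L^q(dt/t)}^q \le (p')^q\|f\|_{p,q}^q = (p')^q\|\Delta_g u\|_{p,q}^q$; the delicate part is that one actually wants equality-type control, i.e. a matching lower bound for the left side, which is where the sharpened estimate $\sinh^{q(n-1)}(F(t)) \ge (t/\si_n)^{q(n-1)/n} + (\tfrac{n-1}{n})^q(t/\si_n)^q$ (recalled just before Proposition \ref{L1}) enters: it lets one split off precisely the zeroth-order piece $C(n,2,p)^q\|u\|_{p,q}^q$ and leaves the remainder nonnegative. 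The \textbf{main obstacle} is exactly this bookkeeping: getting the constants to come out \emph{sharp} rather than merely up to a multiplicative factor. Concretely, the difficulty is to verify that the two-step iteration of first-order sharp constants equals the claimed second-order constant, and that the subtracted Poincar\'e terms combine additively (not just as an inequality in the wrong direction) after the substitution $u \rightsquigarrow v$ — this requires that the comparison $u^* \le v$ be used consistently on both sides and that the weighted one-dimensional Hardy inequality underlying \eqref{eq:Hardy} be applied in its sharp form. Once the sharp one-dimensional inequality with the weight $t^{q(1/p-2/n)+q-1}$ is in hand (which is a classical weighted Hardy inequality whose best constant is explicit), all three displays follow by specialization and the routine manipulations sketched above.
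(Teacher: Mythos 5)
The paper gives no proof of Proposition~\ref{L2} at all: it simply says ``Proposition \ref{L2} follows from Theorem $2.8$ in \cite{Nguyen2020b}.'' So what you were being asked to do is reconstruct a proof that lives elsewhere. Your primary line of attack — pass to $f=-\Delta_g u$, use the pointwise majorant $v$ from \eqref{eq:NgoNguyen}, note $v'(t)=-tf^{**}(t)(n\sigma_n\sinh^{n-1}(F(t)))^{-2}$, then use the kernel bounds on $\sinh^{n-1}(F(t))$ together with one-dimensional Hardy inequalities on $v$ and on $f^{**}$ — is indeed the correct framework, and with the right bookkeeping it does close: for \eqref{eq:LSorder2} one uses $\sinh^n(F(t))\ge t/\sigma_n$, the Hardy inequality $\|f^{**}t^{1/p}\|_{L^q(dt/t)}\le p'\|f\|_{p,q}$ and the weighted Hardy inequality $\int_0^\infty v^q\, t^{q(1/p-2/n)-1}dt\le (np/(n-2p))^q\int_0^\infty|v'|^q\,t^{q(1/p-2/n)+q-1}dt$; for \eqref{eq:improvedLS2} one additionally applies the Hardy bound $\|v\|_{p,q}^q\le p^q\int_0^\infty|v'|^q t^{q/p+q-1}dt$, combines the two integrals under a single sign, and splits the resulting bracket via the sharpened $\sinh$ estimate; \eqref{eq:improvedLS2a} then follows from \eqref{eq:improvedLS2} by yet another weighted Hardy step. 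However, you do not actually carry any of this out, and you explicitly flag ``getting the constants to come out sharp'' as the main obstacle — so the essential content of the proposition is left unverified.

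Beyond being a sketch, the write-up contains two concrete errors. First, you state ``the bound $\sinh^{n-1}(F(t)) < n\sigma_n/(n-1)$'' as a consequence of \eqref{eq:keyyeu*}. This is a misreading: \eqref{eq:keyyeu*} says $t/\sinh^{n-1}(F(t)) < n\sigma_n/(n-1)$, i.e.\ $\sinh^{n-1}(F(t)) > (n-1)t/(n\sigma_n)$ — a lower bound on $\sinh^{n-1}(F(t))$, not an upper bound, and $\sinh^{n-1}(F(t))\to\infty$ as $t\to\infty$ so the inequality you wrote is false. Second, in the denominator of $|v'(t)|^q$ the relevant power is $\sinh^{2q(n-1)}(F(t))$, whereas the sharpened estimate you cite is stated with exponent $q(n-1)$; you need the version with exponent $2q(n-1)$, i.e.\ apply the cited lemma with $q$ replaced by $2q$, and check that the hypothesis $q\ge 2n/(n-1)$ covers this (it does, since $2q\ge 2n/(n-1)$ is weaker). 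You do not flag or make this adjustment.

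The more serious gap is in the ``alternatively and more cleanly'' route via Proposition~\ref{L1}. To iterate a first-order Lorentz--Sobolev inequality twice for $\Delta_g u$ you would need a step of the form $\|\nabla_g u\|_{p^*,q}\le C\|\Delta_g u\|_{p,q}$, but no such inequality is available in the paper and Proposition~\ref{L1} does not supply one: it compares $\|\nabla_g w\|_{p,q}$ with $\|w\|_{p^*,q}$ for a scalar $w$, and iterating it would require control of $\nabla_g(\nabla_g u)$, not $\Delta_g u$. Moreover, the claimed ``multiplicativity of the Poincar\'e constants'' is false: the first-order constants $(n-1)/p$ and $(n-1)/p^*$ give $(n-1)^2/(pp^*)=(n-1)^2(n-p)/(np^2)$, which equals $C(n,2,p)=(n-1)^2/(pp')=(n-1)^2(p-1)/p^2$ only when $p=2n/(n+1)$, so the ``telescoping'' cannot produce the term $C(n,2,p)^q\|u\|_{p,q}^q$ in general (nor does it even land on the $\|u\|_{p,q}$ norm — after one step the subtracted term would involve $\|u\|_{p^*,q}$). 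This alternative route therefore does not work, and the burden falls entirely on the unexecuted primary computation.
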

Proposition \ref{L1} follows from \cite[Theorem $1.2$]{Nguyen2020a} while Proposition \ref{L2} follows from Theorem $2.8$ in \cite{Nguyen2020b}.

\section{Proof of Theorem \ref{MAINI}}
In this section, we prove Theorem \ref{MAINI}. The main point is the proof of the case $m=2$. For the case $m\geq 3$, the proof is based on the iteration argument by using the inequalities \eqref{eq:LSorder2} and \eqref{eq:Sob} below.

\begin{proof}[Proof of Theorem \ref{MAINI}] 
We divide the proof of \eqref{eq:AdamsLorentz} into three following cases:\\

\emph{Case 1: $m =2$.} It is enough to consider $u \in C_0^\infty(\H^n)$ with $\|\Delta_g u\|_{\frac n2,q} \leq 1$. Denote $f = -\Delta_g u$ and define $v$ by \eqref{eq:NgoNguyen}, then we have $u^* \leq v$. By \cite[Theorem $1.1$]{Nguyen2020b} , we have $\|u\|_{\frac n2,q}^q \leq C$. Here and in the sequel, we denote by $C$ a generic constant which does not depend on $u$ and whose value maybe changes on each line. For any $t\geqs 0$, we have
\[
\frac n{2q} u^*(t)^q t^{\frac{2q}n} \leq \int_0^t u^*(s)^q s^{\frac{2q}n-1} ds \leq \|u\|_{p,q}^q \leq C,
\]
which yields $u^*(t) \leq C t^{-\frac 2n}$, $t\geqs 0$. Therefore, it is not hard to see that
\[
\Phi_{\frac n2,q}(\beta_{n,2}^{\frac q{q-1}} u^*(t)^{\frac q{q-1}}) \leq C u^*(t)^{\frac{q}{q-1} (j_{\frac n2,q} -1)} \leq C t^{-\frac 2n \frac{q}{q-1} (j_{\frac n2,q} -1)},\quad \forall\, t\geq 1.
\]
By the choice of $j_{\frac n2,q}$, we then have
\begin{equation}\label{eq:tach12}
\int_1^\infty \Phi_{\frac n2,q}(\beta_{n,2}^{\frac q{q-1}} u^*(t)^{\frac q{q-1}}) dt \leq C.
\end{equation}
On the other hand, we have
\begin{align}\label{eq:on01}
\int_0^1 \Phi_{\frac n2,q}(\beta_{n,2}^{\frac q{q-1}} u^*(t)^{\frac q{q-1}}) dt &\leq \int_0^1 \exp\Big(\beta_{n,2}^{\frac q{q-1}} u^*(t)^{\frac q{q-1}}\Big) dt \notag\\
&\leq \int_0^1 \exp\Big(\beta_{n,2}^{\frac q{q-1}} v(t)^{\frac q{q-1}}\Big) dt \notag\\
&= \int_0^\infty \exp\Big(-t + \beta_{n,2}^{\frac q{q-1}} v(e^{-t})^{\frac q{q-1}}) dt.
\end{align}
Notice that
\[
v(e^{-t}) =\int_{e^{-t}}^\infty \frac{r}{(n\si_n \sinh^{n-1}(F(r)))^2} f^{**}(r) dr = \int_{-\infty}^t \frac{e^{-2(1-\frac1n)s}}{(n\si_n \sinh^{n-1}(F(e^{-s})))^2} e^{-\frac{2}ns}f^{**}(e^{-s}) ds.
\]
Denote
\[
\phi(s) = \frac{n-2}{n} e^{-\frac 2n s} f^{**}(e^{-s}),
\]
we then have
\begin{equation}\label{eq:boundnormorder2}
\int_{\R}\phi(s)^q ds = \lt(\frac{n-2}n\rt)^q \int_0^\infty (f^{**}(t) t^{\frac2n})^q \frac{dt}t \leq 1,
\end{equation}
here we used the Hardy inequality \eqref{eq:Hardy} and $\|\Delta_g u\|_{L^{\frac n2,q}(\H^n)} \leq 1$. Define the function
\[
a(s,t) = 
\begin{cases}
\be_{n,2} \frac{n}{n-2} \frac{e^{-2(1-\frac1n)s}}{(n\si_n \sinh^{n-1}(F(e^{-s})))^2} &\mbox{if $s \leq t$,}\\
0&\mbox{if $s > t$.}
\end{cases}
\]
Using the inequality $\si_n \sinh^n(F(r)) \geq r$, we have for $0 \leq s \leq t$
\begin{equation}\label{eq:boundby1order2}
a(s,t) \leq \be_{n,2} \frac1{n(n-2) \si_n^{\frac2n}} = 1.
\end{equation}
Moreover, for $t >0$ we have
\begin{align*}
\int_{-\infty}^0 a(s,t)^{q'} ds + \int_t^\infty a(s,t)^{q'} ds& = \be_{n,2}^{q'} \lt(\frac n{n-2}\rt)^{q'} \int_{-\infty}^0 \lt(\frac{e^{-2(1-\frac1n)s}}{(n\si_n \sinh^{n-1}(F(e^{-s})))^2}\rt)^{q'} ds\\
&\leq \be_{n,2}^{q'} \lt(\frac n{n-2}\rt)^{q'} (n-1)^{-2q'} \int_{-\infty}^0 e^{\frac2n q'} ds\\
&= \be_{n,2}^{q'} \lt(\frac n{n-2}\rt)^{q'} (n-1)^{-2q'} \frac{n}{2q'},
\end{align*}
here we used $n\sigma_n \sinh^{n-1}(F(r)) \geq (n-1) r$. Hence
\begin{equation}\label{eq:dk2Adamsorder2}
\sup_{t >0} \lt(\int_{-\infty}^0 a(s,t)^{q'} ds + \int_t^\infty a(s,t)^{q'} ds\rt)^{\frac1{q'}} \leq \lt(\beta_{n,2}^{q'} \lt(\frac n{n-2}\rt)^{q'} (n-1)^{-2q'} \frac{n}{2q'}\rt)^{\frac1{q'}}.
\end{equation}
Notice that
\begin{equation}\label{eq:majoru*2}
\be_{n,2}v(e^{-t}) \leq \int_{\R} a(s,t) \phi(s) ds.
\end{equation}
With \eqref{eq:on01}, \eqref{eq:boundnormorder2}, \eqref{eq:boundby1order2}, \eqref{eq:dk2Adamsorder2} and \eqref{eq:majoru*2} at hand, we can apply Adams' Lemma \cite{Adams} to obtain
\begin{equation}\label{eq:tporder22}
\int_0^1 \Phi_{\frac n2,q}(\beta_{n,2}^{q'} u^*(t)^{\frac q{q-1}}) dt \leq \int_0^\infty e^{-t + \beta_{n,2}^{q'} v(t)^{q'}} dt \leq C.
\end{equation}
Combining \eqref{eq:tach12} and \eqref{eq:tporder22} together, we arrive
\[
\int_{\R^n} \Phi_{\frac n2,q}(\be_{n,2}^{q'} |u|^{q'}) dx  = \int_0^\infty \Phi_{\frac n2,q}(\be_{n,2}^{q'} (u^*(t))^{q'}) dt \leq C,
\]
for any $u \in W^2L^{\frac n2,q}(\H^n)$ with $\|\Delta_g u\|_{L^{\frac n2,q}(\H^n)} \leq 1$. This proves \eqref{eq:AdamsLorentz} for $m =2$.\\

\emph{Case 2: $m =2k$, $k\geq 2$.} To obtain the result in this case, we apply the iteration argument. Firstly, by iterating the inequality \eqref{eq:LSorder2}, we have that for $k\geq 1$, $q \in (1,\infty)$ and $p \in (1,\frac n{2k})$ 
\[
\|\Delta_g^k u\|_{p,q}^q \geq S(n,2k,p)^q \|u\|_{p_{2k}^*,q}^q.
\]
Hence, if $u \in W^{2k} L^{\frac n{2k},q}(\H^n)$ with $\|\Delta_g^k u\|_{\frac n{2k},q} \leq 1$, then we have 
\[
S(n,2(k-1),\frac n{2k}) \|\Delta_g u\|_{\frac n2,q} \leq 1.
\]
Define $w = S(n,2(k-1),\frac n{2k}) u$, then $\|w\|_{\frac n2,q} \leq 1$. Using the result in the \emph{Case 1} with remark that 
\[
\beta_{n,2k} = \beta_{n,2} S(n,2(k-1),\frac n{2k}),
\]
we obtain
\begin{equation}\label{eq:Case1}
\int_{\H^n} \Phi_{\frac n2,q}(\beta_{n,2k}^{q'} |u|^{q'}) dV_g \leq C.
\end{equation}
By the Lorentz--Poincar\'e inequality \eqref{eq:Poincare}, we have $\|u\|_{\frac n{2k},q}^q \leq C$. Similarly in the \emph{Case 1}, we get $u^*(t) \leq C t^{-\frac{2k}n}$, $t\geqs 0$. Hence, for $t\geq 1$, it holds
\[
\Phi_{\frac n{2k},q}(\beta_{n,2k}^{q'} u^*(t)^{q'}) \leq C (u^*(t))^{q'(j_{\frac n{2k},q} -1)} \leq C t^{-\frac{2k}n q'(j_{\frac n{2k},q} -1)},
\]
which implies
\begin{equation}\label{eq:tach12k}
\int_1^\infty \Phi_{\frac n{2k} ,q}(\beta_{n,2k}^{\frac q{q-1}} u^*(t)^{\frac q{q-1}}) dt \leq C
\end{equation}
by the choice of $j_{\frac n{2k},q}$. Since
\[
\lim_{t\to \infty} \frac{\Phi_{\frac n{2k},q}(t)}{\Phi_{\frac n2,q}(t)} = 1,
\]
then there exists $A$ such that $\Phi_{\frac n{2k},q}(t) \leq 2 \Phi_{\frac n2,q}(t)$ for $t \geq A$. Hence, we have
\begin{align*}
\int_0^1 \Phi_{\frac n{2k},q}(\beta_{n,2k}^{\frac q{q-1}} u^*(t)^{\frac q{q-1}}) dt & = \int_{\{t\in (0,1):u^*(t) \leqs A^{1/q'} \beta_{n,2k}^{-1}\}} \Phi_{\frac n{2k},q}(\beta_{n,2k}^{\frac q{q-1}} u^*(t)^{\frac q{q-1}}) dt\\
&\quad + \int_{\{t\in (0,1):u^*(t) \geq A^{1/q'} \beta_{n,2k}^{-1}\}} \Phi_{\frac n{2k},q}(\beta_{n,2k}^{\frac q{q-1}} u^*(t)^{\frac q{q-1}}) dt\\
&\leq C + 2\int_{\{t\in (0,1):u^*(t) \geq A^{1/q'} \beta_{n,2k}^{-1}\}} \Phi_{\frac n2,q}(\beta_{n,2k}^{\frac q{q-1}} u^*(t)^{\frac q{q-1}}) dt\\
&\leq C + \int_0^1 \Phi_{\frac n2,q}(\beta_{n,2k}^{\frac q{q-1}} u^*(t)^{\frac q{q-1}}) dt\\
&\leq C
\end{align*}
here we have used \eqref{eq:Case1}. Combining the previous inequality together with \eqref{eq:tach12k} proves the result in this case.\\

\emph{Case 3: $m =2k+1$, $k\geq 1$.} Let $f = -\Delta_g^{k} u$. Since $q \leq \frac n{2k+1}$, then it was proved in \cite{Nguyen2020a} (the formula after $(2.8)$ with $u$ replaced by $f$) that 
\[
\|\na_g^m u\|_{\frac n{2k+1},q}^q = \|\na_g f\|_{\frac n{2k+1},q}^q \geq \int_0^\infty |(f^*)'(t)|^q (n \si_n \sinh^{n-1} (F(t)))^q t^{\frac{(2k+1) q}n -1} dt.
\]
Using \eqref{eq:keyyeu}, we have
\[
\|\na_g^m u\|_{\frac n{2k+1},q}^q \geq n^q \si_n^{\frac qn} \int_0^\infty |(f^*)'(t)|^q t^{\frac{2kq}n + q -1} dt.
\]
Applying the one-dimensional Hardy inequality, it holds
\begin{equation}\label{eq:Sob}
\|\na_g^m u\|_{\frac n{2k+1},q}^q \geq (2k)^q \si_n^{\frac qn}\int_0^\infty |f^*(t)|^q t^{\frac{2kq}n -1} dt = (2k)^q \si_n^q \|\Delta_g^k u\|_{\frac n{2k},q}^q.
\end{equation}
For any $u \in W^{2k+1} L^{\frac n{2k+1},q}(\H^n)$ with $\|\na_g^m u\|_{\frac n{2k+1},q} \leq 1$, define $w= 2k \si_n^{\frac1n} u$. By \eqref{eq:Sob}, we have $\|w\|_{\frac n{2k},q}^q \leq 1$. Using the result in  the \emph{Case 2} with remark that 
\[
\beta_{n,2k+1} =2k \si_n^{\frac 1n} \beta_{n,2k},
\]
we obtain
\begin{equation}\label{eq:Case2}
\int_{\H^n} \Phi_{\frac n{2k},q} (\beta_{n,2k+1}^{q'} |u|^{q'}) dV_g \leq C.
\end{equation}
Using \eqref{eq:Case2} together with the last arguments in the proof of the \emph{Case 2} proves the result in this case. \\

It remains to check the sharpness of constant $\beta_{n,m}^{\frac q{q-1}}$. To do this, we construct a sequence of test functions as follows
\[
v_j(x) = \begin{cases}
\frac{(\ln j)^{1/q'}}{\beta_{n,m}} + \frac{n\beta_{n,m}}{2(\ln j)^{1/q}} \sum_{i=1}^{m-1} \frac{(1-j^{\frac 2n}|x|^2)^i}{i} &\mbox{if $0\leq |x| \leq j^{-\frac 1n}$,}\\
-\frac n{\beta_{n,m}} (\ln j)^{-1/q} \ln |x|&\mbox{if $j^{-\frac1n} \leqs |x| \leq 1$,}\\
\xi_j(x) &\mbox{if $1 \leqs |x| \leqs 2$},
\end{cases}
\quad j\geq 2
\]
where $\xi \in C_0^\infty(2\B^n)$ are radial function chosen such that $\xi_j = 0$ on $\pa \B^n$ and for $i=1,\ldots,m-1$
\[
\frac{\pa^i \xi_j}{\pa r^i} \Big{|}_{\pa \B^n} = (-1)^i (i-1)! n\beta_{n,m}^{-1} (\ln j)^{-1/q},
\]
and $\xi_j$, $|\na^l \xi_j|$ and $|\na^m \xi_j|$ are all $O((\ln j)^{-1/q})$ as $j\to \infty$. For $\ep \in (0,1/3)$ let us define $u_{\ep,j}(x) =v_j(x/\ep)$. Then $u_{\ep,j} \in W^m L^{\frac nm,q}(\H^n)$ has support contained in $\{|x| \leq 2\ep\}$. It is easy to check that
\[
|\na_g^m u_{\ep,j}(x)| \leq \lt(\frac{1-|x|^2}2\rt)^m C (\ep^{-1} j^{\frac1n})^m (\ln j)^{-1/q}\leq C2^{-m}(\ep^{-1} j^{\frac1n})^m (\ln j)^{-1/q}
\]
for $|x| \leq \ep j^{-\frac1n}$, and
\[
|\na_g^m u_{\ep,j}(x)| \leq C \ep^{-m} (\ln j)^{-\frac1q}\lt(\frac{1-|x|^2}2\rt)^m \leq C2^{-m}\ep^{-m} (\ln j)^{-\frac1q}
\]
for $|x|\in (\ep, 2\ep)$ with a positive constant $C$ independent of $\ep\leqs \frac13$ and $j$. Furthermore, we can check that
\begin{align*}
|\na^m_g u_{\ep,j}(x)|&\leq \lt(\frac{1-|x|^2}2\rt)^m\lt((|x|^n \si_n)^{-\frac mn} + C |x|^{-m+1}\rt) (\ln j)^{-\frac1q}\\
& \leq 2^{-m}(\ln j)^{-\frac1q} \lt((|x|^n \si_n)^{-\frac mn} + C |x|^{-m+1}\rt)
\end{align*}
and
\begin{align*}
|\na^m_g u_{\ep,j}(x)|&\geq \lt(\frac{1-|x|^2}2\rt)^m\lt((|x|^n \si_n)^{-\frac mn} - C |x|^{-m+1}\rt) (\ln j)^{-\frac1q}\\
& \geq \lt(\frac{1-\ep^2}2\rt)^{-m}(\ln j)^{-\frac1q} \lt((|x|^n \si_n)^{-\frac mn} - C |x|^{-m+1}\rt)
\end{align*}
for $|x| \in (\ep j^{-\frac1n}, \ep )$ with $\ep \geqs 0$ small enough where $C$ is a positive constant independent of $\ep$ and $j$. Define
\[
h_1(x) = \begin{cases}
C2^{-m}(\ep^{-1} j^{\frac1n})^m (\ln j)^{-1/q}&\mbox{if $|x| \leq \ep j^{-\frac1n}$}\\
2^{-m}(\ln j)^{-\frac1q} \lt((|x|^n \si_n)^{-\frac mn} + C |x|^{-m+1}\rt)&\mbox{if $|x| \in (\ep j^{-\frac1n}, \ep )$}\\
C2^{-m}\ep^{-m} (\ln j)^{-\frac1q}&\mbox{if $|x|\in (\ep, 2\ep)$}\\
0&\mbox{if $|x| \in (2\ep,1)$},
\end{cases}
\]
Then we have $0 \leq |\na_g^m u| \leq h_1$. Consequently, we get $0 \leq |\na^m_g u|^* \leq h_1^*$. Let us denote by $h_1^{*,e}$  the rearrangement function of $h_1$ with respect to Lebesgue measure. Since the support of $h_1$ is contained in $\ep \{|x| \leq \ep\}$, then we can easy check that
\[
h_1^*(t) \leq h_1^{*,e}\lt(\Big(\frac{1-\ep^2}2\Big)^n t\rt).
\]
Consequently, we have
\[
\|\na_g^m u_{\ep,j}\|_{\frac nm,q}^q \leq \lt(\frac2{1-\ep^2}\rt)^{mq} \int_0^\infty h_1^{*,e}(t)^q t^{\frac{mq}n -1} dt
\]
Notice that by enlarging the constant $C$ (which is still independent of $\ep$ and $j$), we can assume that 
\[
C2^{-m}\ep^{-m} (\ln j)^{-\frac1q} \geq h_1\Big |_{\{|x| = \ep\}} =2^{-m}(\ln j)^{-\frac1q} \ep^{-m}\lt(\si_n^{-\frac mn} + C \ep \rt)
\]
for $\ep \geqs 0$ small enough. For $j$ larger enough, we can chose $x_0$ with $\ep j^{-\frac1n} \leqs |x_0| \leq \ep$ such that $C2^{-m}\ep^{-m} (\ln j)^{-\frac1q} = h_1(x_0)$. It is easy to see that $c\ep \leq |x_0| \leq C \ep$ for constant $C, c \geqs 0$ independent of $\ep$ and $j$. We have
\[
h_1(x) \leq g(x) :=\begin{cases}
h_1(x)&\mbox{if $|x| \leq |x_0| $}\\
C2^{-m}\ep^{-m} (\ln j)^{-\frac1q}&\mbox{if $|x|\in (|x_0|, 2\ep)$}\\
0&\mbox{if $|x| \geq 2\ep$}.
\end{cases}
\]
Notice that $g$ is non-increasing radially symmetric function in $\B^n$, hence $g^{\sharp,e} = g$. Using the function $g$, we can prove that
\[
\int_0^\infty h_1^{*,e}(t)^q t^{\frac{mq}n -1} dt \leq 2^{-mq}(1 + C (\ln j)^{-1}).
\]
Therefore, we have
\[
\|\na_g^m u_{\ep,j}\|_{\frac nm,q}^q \leq \lt(\frac1{1-\ep^2}\rt)^{mq}(1 + C (\ln j)^{-1})
\]
Set $w_{\ep,j} = u_{\ep,j}/\|\na_g^m u_{\ep,j}\|_{\frac nm,q}$. For any $\beta \geqs \beta_{n,m}^{q'}$, we choose $\ep \geqs 0$ small enough such that $\gamma := \beta (1-\ep^2)^{\frac{mq}{q-1}}\geqs \be_{n,m}^{q'}$. Then we have
\begin{align*}
\int_{\H^n} \Phi_{\frac nm,q}(\beta |w_{\ep,j}|^{q'}) dV_g &\geq \int_{\{|x|\leq \ep j^{-\frac1n}\}} \Phi_{\frac nm,q}\Big(\frac{\beta}{\|\na_g^m u_{\ep,j}\|_{\frac nm,q}^{q'}} |u_{\ep,j}|^{q'}\Big) dV_g\\
&\int_{\{|x|\leq \ep j^{-\frac1n}\}} \Phi_{\frac nm,q}\Big(\frac{\ga}{(1+ C (\ln j)^{-1})^{q'}} |u_{\ep,j}|^{q'}\Big) dV_g\\
&\geq 2^n \int_{\{|x|\leq \ep j^{-\frac1n}\}} \Phi_{\frac nm,q}\Big(\frac{\ga}{(1+ C (\ln j)^{-1})^{q'}} |u_{\ep,j}|^{q'}\Big) dx\\
&= 2^n \ep^{n}\int_{\{|x|\leq  j^{-\frac1n}\}} \Phi_{\frac nm,q}\Big(\frac{\ga}{(1+ C (\ln j)^{-1})^{q'}} |v_{j}|^{q'}\Big) dx\\
&\geq 2^n \ep^{n}\int_{\{|x|\leq  j^{-\frac1n}\}} \Phi_{\frac nm,q}\Big(\frac{\ga}{\beta_{n,m}^{q'}}\frac{\ln j}{(1+ C (\ln j)^{-1})^{q'}} \Big) dx\\
&=2^n \ep^{n}\si_n \Phi_{\frac nm,q}\Big(\frac{\ga}{\beta_{n,m}^{q'}}\frac{\ln j}{(1+ C (\ln j)^{-1})^{q'}} \Big) e^{-\ln j}.
\end{align*}
Since 
\[
\lim_{j\to \infty} \frac{\ga}{\beta_{n,m}^{q'}}\frac{\ln j}{(1+ C (\ln j)^{-1})^{q'}} = \infty,
\]
then
\[
\Phi_{\frac nm,q}\Big(\frac{\ga}{\beta_{n,m}^{q'}}\frac{\ln j}{(1+ C (\ln j)^{-1})^{q'}} \Big) \geq C e^{\frac{\ga}{\beta_{n,m}^{q'}}\frac{\ln j}{(1+ C (\ln j)^{-1})^{q'}}}
\]
for $j$ larger enough. Consequently, we get
\[
\int_{\H^n} \Phi_{\frac nm,q}(\beta |w_{\ep,j}|^{q'}) dV_g \geq 2^n \ep^n \si_n C e^{\frac{\ga}{\beta_{n,m}^{q'}}\frac{\ln j}{(1+ C (\ln j)^{-1})^{q'}} -\ln j} \to \infty
\]
as $j\to \infty$ since $\ga \geqs \beta_{n,m}^{q'}$. This proves the sharpness of $\beta_{n,m}^{q'}$.

The proof of Theorem \ref{MAINI} is then completely finished. 

\end{proof}

\section{Proof of Theorem \ref{MAINII}}

This section is devoted to prove Theorem \ref{MAINII}. The proof is based on the inequalities \eqref{eq:improvedLS1a} and \eqref{eq:improvedLS2a}, the iteration argument and Theorem \ref{MAINII} for $m\geq 3$. The case $m=2$ is proved by using inequality \eqref{eq:improvedLS2} and the Moser--Trudinger inequality involving to the fractional dimension in Lemma \ref{MT} below. Let $\theta \geqs 1$, we denote by $\lam_\theta$ the measure on $[0,\infty)$ of density
\[
d\lam_{\theta} = \theta \sigma_{\theta} x^{\theta -1} dx, \quad \sigma_{\theta} = \frac{ \pi^{\frac\theta 2}}{\Gamma(\frac\theta 2+ 1)}.
\]
For $0\leqs R \leq \infty$ and $1\leq p \leqs \infty$, we denote by $L_\theta^p(0,R)$  the weighted Lebesgue space of all measurable functions $u: (0,R) \to \R$ for which
\[
\|u\|_{L^p_\theta(0,R)}= \lt(\int_0^R |u|^p d\lam_\theta\rt)^{\frac1p} \leqs \infty.
\]
Besides, we define
\[
W^{1,p}_{\al,\theta}(0,R) =\Big\{u\in L^p_\theta(0,R)\, :\, u' \in L_\alpha^p(0,R),\,\, \lim_{x\to R^{-}} u(x) =0\Big\}, \quad \al, \theta \geqs 1.
\]
In \cite{deOliveira}, de Oliveira and do \'O prove the following sharp Moser--Trudinger inequality involving the measure $\lam_\theta$: suppose $0 \leqs R \leqs \infty$ and $\alpha \geq 2, \theta \geq 1$, then 
\begin{equation}\label{eq:MTOO}
D_{\al,\theta}(R) :=\sup_{u\in W^{1,\al}_{\al,\theta}(0,R),\, \|u'\|_{L^\alpha_\alpha(0,R)} \leq 1} \int_0^R e^{\mu_{\al,\theta} |u|^{\frac{\alpha}{\alpha -1}}} d\lam_\theta \leqs \infty
\end{equation}
where $\mu_{\al,\theta} = \theta \alpha^{\frac1{\al -1}} \sigma_{\al}^{\frac1{\alpha -1}}$. Denote $D_{\al,\theta} = D_{\al,\theta}(1)$. It is easy to see that $D_{\al,\theta}(R) = D_{\al,\theta} R^\theta$. 
\begin{lemma}\label{MT}
Let $\alpha \geqs 1$ and $q\geq 2$. There exists a constant $C_{\al,q} \geqs 0$ such that for any $u \in W^{1,q}_{q,\al}(0,\infty),$ $u' \leq 0$ and $\|u\|_{L^q_{\al}(0,\infty)}^q + \|u'\|_{L^q_q(0,\infty)}^q\leq 1$, it holds
\begin{equation}\label{eq:Abreu}
\int_0^\infty \Phi_{\frac q\al,q}(\mu_{q,1} |u|^{\frac q{q -1}}) d\lam_1 \leq C_{\al,q}.
\end{equation}
\end{lemma}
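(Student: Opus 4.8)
The plan is to reduce the weighted one‑dimensional inequality \eqref{eq:Abreu} to the sharp Moser--Trudinger inequality \eqref{eq:MTOO} of de Oliveira and do \'O, with $\theta=1$, on the \emph{unit interval} $(0,1)$, by splitting the half‑line $(0,\infty)$ into $(0,1)$ and $(1,\infty)$ and exploiting the two normalization conditions $\|u'\|_{L^q_q(0,\infty)}^q\le 1$ and $\|u\|_{L^q_\al(0,\infty)}^q\le 1$ separately. The key structural point is that the subtracted term $\Phi_{q/\al,q}$ is a truncated exponential whose first retained power is $j_{q/\al,q}-1 > (q/\al)(q-1)/q$, i.e. large enough that a polynomial decay of $u^*$ on $(1,\infty)$ is integrable against $d\lam_1$.

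First I would treat the tail $(1,\infty)$. Since $u'\le 0$, $u$ is non‑increasing, so for $x\ge 1$ the bound $\|u\|_{L^q_\al(0,\infty)}^q\le 1$ combined with $d\lam_\al = \al\sigma_\al x^{\al-1}dx$ gives, after integrating from $0$ to $x$, an estimate of the form $|u(x)|^q x^{\al} \le C$, hence $|u(x)| \le C x^{-\al/q}$ on $(1,\infty)$. Plugging this into the definition of $\Phi_{q/\al,q}$ and using that $\Phi_{q/\al,q}(t) \le C t^{\,j_{q/\al,q}-1}$ for bounded $t$ (which holds here because $u$ is bounded on $(1,\infty)$), one gets an integrand bounded by $C x^{-\frac{\al}{q}\cdot\frac{q}{q-1}(j_{q/\al,q}-1)}$ against the measure $d\lam_1 = \sigma_1\,dx$ (a constant times Lebesgue measure). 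By the choice $j_{q/\al,q} = \min\{j\in\N : j > 1 + (q/\al)(q-1)/q\}$ the exponent strictly exceeds $1$, so $\int_1^\infty \Phi_{q/\al,q}(\mu_{q,1}|u|^{q/(q-1)})\,d\lam_1 \le C_{\al,q}$.

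Next I would treat $(0,1)$. On this interval I would bound $\Phi_{q/\al,q}(t) \le e^t$ pointwise and estimate
\[
\int_0^1 \Phi_{q/\al,q}\big(\mu_{q,1}|u|^{q/(q-1)}\big)\,d\lam_1 \le \int_0^1 \exp\big(\mu_{q,1}|u|^{q/(q-1)}\big)\,d\lam_1 .
\]
Set $v(x) = u(x) - u(1)$ on $(0,1)$, so $v\in W^{1,q}_{q,1}(0,1)$ with $v(1)=0$ and $\|v'\|_{L^q_q(0,1)}^q \le \|u'\|_{L^q_q(0,\infty)}^q \le 1$. Thus $v$ is admissible for \eqref{eq:MTOO} with $\al$ there equal to $q$, $\theta=1$, $R=1$, giving $\int_0^1 e^{\mu_{q,1}|v|^{q/(q-1)}}\,d\lam_1 \le D_{q,1} < \infty$. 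It remains to absorb the constant shift $u(1)$: by $u'\le 0$ and $\|u\|_{L^q_\al(0,\infty)}^q\le 1$ one has $|u(1)|^q \le |u(x)|^q$ for $x\le 1$ is false in the wrong direction, so instead I use $|u(1)| = u^*(1)$‑type control, namely $|u(1)|^q\,\lam_\al([0,1]) \le \|u\|_{L^q_\al(0,1)}^q \le 1$, hence $|u(1)| \le C_\al$ uniformly. Then the elementary convexity inequality $|a+b|^{q/(q-1)} \le (1+\delta)|a|^{q/(q-1)} + C_\delta |b|^{q/(q-1)}$ with $\delta$ chosen so that $(1+\delta)$ is still harmless (here there is no sharp‑constant issue since we only need \emph{finiteness}, not the sharp exponent), applied with $a=v$, $b=u(1)$, reduces the integral over $(0,1)$ to $e^{C_\delta\,\mu_{q,1}C_\al^{q/(q-1)}}\int_0^1 e^{(1+\delta)\mu_{q,1}|v|^{q/(q-1)}}\,d\lam_1$; rescaling $v$ by $(1+\delta)^{(q-1)/q}$ shows the latter is $\le D_{q,1}$ applied to a function with $\|v'\|^q\le (1+\delta)\le 2$, hence bounded by a dimensional constant. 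Combining the two pieces gives \eqref{eq:Abreu}.

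The main obstacle I anticipate is the bookkeeping in the $(0,1)$ piece: one must not lose the sharp constant \emph{in the subsequent application} of Lemma \ref{MT} inside the proof of Theorem \ref{MAINII}, yet here, at the level of the lemma itself, only finiteness is claimed, so the crude convexity splitting with a small loss $(1+\delta)$ is acceptable. A secondary technical point is justifying the pointwise bound $|u(x)|\le Cx^{-\al/q}$ on the tail rigorously for $W^{1,q}_{q,\al}$ functions (an absolute‑continuity / density argument, reducing to smooth compactly supported $u$ and passing to the limit), and checking that $j_{q/\al,q}-1$ is indeed $>\frac{(q-1)}{q}\cdot\frac{\al}{\al}$ — i.e. $>\frac{q-1}{q}\cdot\frac{q/\al}{q/\al}$, which is exactly the defining property of $j_{q/\al,q}$ — so that the tail integral converges. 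Both are routine once set up.
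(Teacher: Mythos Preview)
Your overall strategy---split at a finite radius, use pointwise decay on the tail, reduce the inner part to \eqref{eq:MTOO}---is the same as the paper's, and your tail estimate on $(1,\infty)$ is correct. The gap is in the inner piece.

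You write that ``there is no sharp-constant issue since we only need finiteness,'' but this misreads the statement: the exponent $\mu_{q,1}$ appearing inside $\Phi_{q/\al,q}$ \emph{is} the critical constant of \eqref{eq:MTOO} with $\theta=1$, so any multiplicative loss on it is fatal. After your convexity split you are asking for a uniform bound on
\[
\int_0^1 e^{(1+\de)\mu_{q,1}|v|^{q'}}\,d\lam_1 \qquad\text{subject to}\quad \|v'\|_{L^q_q(0,1)}\le 1 .
\]
Your proposed rescaling $\tilde v=(1+\de)^{(q-1)/q}v$ turns this into $\int_0^1 e^{\mu_{q,1}|\tilde v|^{q'}}d\lam_1$ with $\|\tilde v'\|_{L^q_q}^q\le (1+\de)^{q-1}>1$, and by the sharpness of $\mu_{q,1}$ in \eqref{eq:MTOO} the supremum over all such $\tilde v$ is $+\infty$. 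So the claim ``hence bounded by a dimensional constant'' is false, and the argument does not close.

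What is missing is the \emph{coupling} in the hypothesis $\|u\|_{L^q_\al}^q+\|u'\|_{L^q_q}^q\le 1$; you use only that each term is $\le 1$. Write $w=u-u(R)$ on $(0,R)$. Then $\|w'\|_{L^q_q(0,R)}^q\le 1-\|u\|_{L^q_\al}^q$, while $u(R)^q\le \|u\|_{L^q_\al}^q/(\si_\al R^\al)$. Since $q\ge 2$ (so $q'\le 2$) one has, via Young's inequality,
\[
u^{q'}\le w^{q'}\Big(1+\frac{C}{q}\,u(R)^q\Big)+\text{const},
\]
and the rescaled function $v=w\big(1+\tfrac{C}{q}u(R)^q\big)^{(q-1)/q}$ then satisfies
\[
\|v'\|_{L^q_q}^q\le \Big(1+\frac{C\,\|u\|_{L^q_\al}^q}{q\,\si_\al R^\al}\Big)^{q-1}\big(1-\|u\|_{L^q_\al}^q\big)\le 1
\]
provided $R$ is fixed large enough (depending only on $\al,q$), uniformly in $\|u\|_{L^q_\al}^q\in[0,1]$. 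Now \eqref{eq:MTOO} applies directly at the sharp exponent. This Ruf-type balancing is exactly what the paper does; the crude $(1+\de)$ split cannot substitute for it.
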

\begin{proof}
We follows the argument in \cite{Ruf2005}. Since $u' \leq 0$ then $u$ is a non-increasing function. Hence, for any $t \geqs 0$, it holds
\begin{equation}\label{eq:boundu}
u(r)^q \leq \frac{1}{\si_\al r^\al} \int_0^r u(s)^q d\lam_\al \leq \frac{\int_0^\infty u(s)^q d\lam_\al}{\si_\al r^\al} \leq \frac{\|u\|_{L^q_\al(0,\infty)}^q}{\si_\al r^\al}.
\end{equation}
For $R \geqs 0$, define $w(r) = u(r) - u(R)$ for $r \leq R$ and $w(r) =0$ for $r \geqs R$. Then $w \in W^{1,q}{q,q}(0,R)$ and 
\begin{equation}\label{eq:on0R}
\|w\|_{L^q_q(0,R)}^q = \int_0^R |u'(s)|^q d\lam_q \leq 1 - \|u\|_{L^q_\al(0,\infty)}^q.
\end{equation}
For $r \leq R$, we have $u(r) = w(r) + u(R)$. Since $q \geq 2$, then there exists $C \geqs 0$ depending only on $q$ such that
\[
u(r)^{q'} \leq w(r)^{q'} + C w(r)^{q'-1} u(R) + u(R)^{q'}.
\]
Applying Young's inequality and \eqref{eq:boundu}, we get
\begin{align}\label{eq:E1}
u(r)^{q'} &\leq w(r)^{q'}\lt(1+ \frac{C}{q} u(R)^q\rt) + \frac{q-1}q + u(R)^{q'}\notag\\
&\leq w(r)^{q'}\lt(1+ \frac{C}{q\si_\al R^\al}\rt) + \frac{q-1}q + \lt(\frac1{\si_\al R^\al}\rt)^{q'-1}.
\end{align}
Fix a $R \geq 1$ large enough such that $\frac{C}{q\si_\al R^\al} \leq 1$, and set
\[
v(r) = w(r) \lt(1+ \frac{C}{q\si_\al R^\al}\rt)^{\frac{q-1}q}.
\]
Using \eqref{eq:on0R} and the choice of $R$, we can easily verify that $\|v\|_{L^q_q(0,R)}^q \leq 1$. Hence, applying \eqref{eq:MTOO}, we get
\begin{equation}\label{eq:on0R1}
\int_0^R e^{\mu_{q,1} |u|^{q'}} d\lam_1 \leq D_{q,1} R.
\end{equation}
For $r \geq R$, we have $u(r) \leq \si_\al^{-\frac1q} R^{-\frac\al q}$, hence it holds
\[
\Phi_{\frac q\al,q} (\mu_{q,1}  |u(r)|^{q'}) \leq C |u(r)|^{q'(j_{\al,q}-1)} \leq C r^{-\frac{\al}{q-1}(j_{\al,q} -1)}.
\]
By the choice of $j_{\al,q}$, we have
\begin{equation}\label{eq:E2}
\int_R^\infty \Phi_{\frac q\al,q}(\mu_{q,1}  |u(r)|^{q'}) d\lam_1 \leq C.
\end{equation}
Putting \eqref{eq:E1}, \eqref{eq:on0R1}, \eqref{eq:E2} together and using $R\geq 1$, we get
\begin{align*}
\int_0^\infty \Phi_{\frac q\al,q}(\mu_{q,1} |u|^{q'}) d\lam_1 &\leq \int_0^R \Phi_{\frac q\al,q}(\mu_{q,1}  |u|^{q'}) d\lam_1 + \int_R^\infty \Phi_{\frac q\al,q}(\mu_{q,1}  |u|^{q'}) d\lam_1 \\
&\leq \int_0^R \exp\Big(\mu_{q,1}  |u|^{q'}\Big) d\lam_1 + C\\
&\leq \int_0^R \exp\Big(\mu_{q,1}  v^{q'} + \mu_{q,1} \big(\frac{q-1}q + \si_\al^{-\frac1{q-1}}\big)\Big) d\lam_1 + C\\
&\leq \exp\Big(\mu_{q,1} \big(\frac{q-1}q + \si_\al^{-\frac1{q-1}}\big)\Big)D_{q,1} R + C\\
&\leq C.
\end{align*}
\end{proof}

For any $\tau \geqs 0$ and $u \in W^{1,q}_{q,\al}(0,\infty),$ such that $u' \leq 0$ and $\tau \|u\|_{L^q_\alpha(0,\infty)}^q + \|u'\|_{L^q_q(0,\infty)}^q\leq 1$. Applying \eqref{eq:Abreu} for function $u_\tau(x) = u(\tau^{-\frac1\al} x)$ and making the change of variables, we obtain
\begin{equation}\label{eq:Abreu1}
 \int_0^\infty \Phi_{\frac q \alpha,q}(\mu_{q,1}  |u|^{q'}) d\lam_1 \leq C \tau^{-\frac 1\al}.
\end{equation}
 
We are now ready to give the proof of Theorem \ref{MAINII}.

\begin{proof}[Proof of Theorem \ref{MAINII}]
We divide the proof into the following cases.\\

\emph{Case 1: $m=2$.} Let $u \in C_0^\infty(\H^n)$ with $\|\Delta_g u\|_{\frac n2,q}^q - \lam \|u\|_{\frac n2,q}^q \leq 1$. Define $v$ by \eqref{eq:NgoNguyen} and $\tilde v(x) = v(V_g(B(0,d(0,x))))$, then $u^* \leq v$, $\|\Delta_g u\|_{\frac n2,q} = \|\Delta_g \tilde v\|_{\frac n2,q}$ and $\|u\|_{\frac n2,q} \leq \|\tilde v\|_{\frac n2,q}$. So, we have
\[
\|\Delta_g \tilde v\|_{\frac n2,q}^q -\lam \|\tilde v\|_{\frac n2,q}^q \leq 1.
\]
We show that $\int_{\H^n} \Phi_{\frac n2,q}(\beta_{n,2} |\tilde v|^{q'}) dV_g \leq C.$ Set $\kappa = C(n,2,n/2)^q -\lam \geqs 0$. Applying the inequality \eqref{eq:improvedLS2} for $\tilde v$, we get
\[
\lt(n(n-2) \si_n^{\frac2n}\rt)^q \int_0^\infty |v'(t)|^q t^{ q -1} dt + \kappa \int_0^\infty v(t)^q t^{\frac{2q}n -1} dt \leq 1.
\]
Define
\[
w = \frac{n(n-2) \si_n^{\frac2n}}{(q \si_q)^{\frac1q}} v,\quad \tau = \frac{q \si_q}{(n(n-2) \si_n^{\frac2n})^q \frac{2q}n \si_{\frac{2q}n}}\kappa,
\]
then, we have
\[
\int_0^\infty |w'|^q d\lam_q + \tau \int_0^\infty |w|^q d \lam_{\frac{2q}n} \leq 1.
\]
Applying the inequality \eqref{eq:Abreu1}, we obtain
\[
\int_0^\infty \Phi_{\frac{n}2,q} (\mu_{q,1} w^{\frac q{q-1}}) d\lam_1 \leq C_{\frac{2q}n,q} \tau^{-\frac n{2q}}.
\]
Notice that
\[
\int_{\H^n} \Phi_{\frac n2,q} (\beta_{n,2}^{q'} |\tilde v|^{q'}) dV_g = \frac12 \int_0^\infty \Phi_{\frac{n}2,q}(\beta_{n,2}^{q'} |v|^{q'}) d\lam_1=\frac12 \int_0^\infty \Phi_{\frac{n}2,q} (\mu_{q,1} w^{\frac q{q-1}}) d\lam_1.
\]
Hence, it holds
\[
\int_{\H^n} \Phi_{\frac n2,q} (\beta_{n,m}^{q'} |\tilde v|^{q'}) dV_g \leq \frac12 C_{\frac{2q}n,q} \tau^{-\frac n{2q}}.
\]
This completes the proof of this case.\\

\emph{Case 2: $m = 2k$, $k\geq 2$.} Denote $\tau = C(n,2k, \frac n{2k})^q -\lam \geqs 0$. We have
\[
1\geq \|\Delta^k_g u\|_{\frac n{2k},q}^q - \lam \|u\|_{\frac n{2k},q}^q \geq \tau \|u\|_{\frac n{2k},q}^q,
\]
which yields
\begin{equation}\label{eq:normu}
\|u\|_{\frac n{2k},q}^q \leq \tau^{-1}.
\end{equation}
On the other hand, by the Lorentz--Poincar\'e inequality \eqref{eq:Poincare} and the Poincar\'e--Sobolev inequality under Lorentz--Sobolev norm \eqref{eq:improvedLS2a}, we have
\begin{align*}
\|\Delta^k_g u\|_{\frac n{2k},q}^q - \lam \|u\|_{\frac n{2k},q}^q &\geq \|\Delta^k_g u\|_{\frac n{2k},q}^q - C(n,2k,\frac{n}{2k})\|u\|_{\frac n{2k},q}^q + \tau \|u\|_{\frac n{2k},q}^q\\
&\geq \|\Delta^k_g u\|_{\frac n{2k},q}^q - C(n,2,\frac{n}{2k})\|\Delta^{k-1}_g u\|_{\frac n{2k},q}^q + \tau \|u\|_{\frac n{2k},q}^q\\
&\geq (2(k-1)(n-2k) \si_n^{\frac 2n})^q \|\Delta^{k-1}_g u\|_{\frac n{2(k-1)},q}^ q + \tau \|u\|_{\frac n{2k},q}^q.
\end{align*}
Set
$
w = 2(k-1)(n-2k) \si_n^{\frac 2n} u
$
we have $\|\Delta^{k-1}_g w\|_{\frac n{2(k-1)},q}^ q \leq 1$. Applying the Adams inequality \eqref{eq:AdamsLorentz}, we obtain
\[
\int_{\H^n} \Phi_{n,2(k-1),q}(\beta_{n,2k}^{q'} |u|^{q'}) dV_g = \int_{\H^n} \Phi_{n,2(k-1),q}(\beta_{n,2(k-1)}^{q'} |w|^{q'}) \leq C,
\]
here we use
\[
\beta_{n,2k} = 2(k-1)(n-2k) \si_n^{\frac 2n} \beta_{n,2(k-1)}.
\]
Using \eqref{eq:normu} and repeating the last argument in the proof of \emph{Case 2} in the proof of Theorem \ref{MAINI}, we obtain \eqref{eq:improvedAL} in this case.\\

\emph{Case 3: $m =2k+1$, $k\geq 1$.} Denote $\tau = C(n,2k+1,\frac n{2k+1})^q -\tau \geqs 0$. Since $\frac{2n}{n-1} \leq q \leq \frac n{2k+1}$, then using the Lorentz--Poincar\'e inequality \eqref{eq:Poincare} and the Poincar\'e--Sobolev inequality under Lorentz--Sobolev norm \eqref{eq:improvedLS1a}, we get
\begin{align*}
1&\geq \|\na_g \Delta_g^k u\|_{\frac n{2k+1},q}^q - \lam \|u\|_{\frac n{2k+1},q}^q \\
&\geq \|\na_g \Delta_g^k u\|_{\frac n{2k+1},q}^q - C(n,2k+1,\frac n{2k+1})^q \|u\|_{\frac n{2k+1},q}^q + \tau \|u\|_{\frac n{2k+1},q}^q\\
&\geq \|\na_g \Delta_g^k u\|_{\frac n{2k+1},q}^q - \lt(\frac{(2k+1)(n-1)}{n}\rt)^q \|\Delta_g^k u\|_{\frac n{2k+1},q}^q + \tau \|u\|_{\frac n{2k+1},q}^q\\
&\geq (2k \si_n^{\frac 1n})^q \|\Delta_g^k u\|_{\frac n{2k},q}^q + \tau \|u\|_{\frac n{2k+1},q}^q.
\end{align*}
We now can use the argument in the proof of \emph{Case 2} to obtain the result in this case. The proof of Theorem \ref{MAINI} is then completely finished.
\end{proof}

\section{Proof of Theorem \ref{HARDYADAMS}}
In this section, we provide the proof of Theorem \ref{HARDYADAMS}. The proof uses the Lorentz--Poincar\'e inequality \eqref{eq:Poincare}, the Poincar\'e--Sobolev inequality under Lorentz--Sobolev norm \eqref{eq:improvedLS1a} and \eqref{eq:improvedLS2a}, and the Adams type inequality \eqref{eq:AdamsLorentz}.

\begin{proof}[Proof of Theorem \ref{HARDYADAMS}]
We divide the proof in two cases according to the facts that $m$ is even or odd.\\

\emph{Case 1: $m=2k$, $k\geq 2$.} Using the Lorentz--Poincar\'e inequality \eqref{eq:Poincare} and the inequality \eqref{eq:improvedLS2a}, we have
\begin{align*}
1\geq \|\Delta^k_g u\|_{\frac n{2k},q}^q - C(n,2k,\frac{n}{2k})\|u\|_{\frac n{2k},q}^q &\geq \|\Delta^k_g u\|_{\frac n{2k},q}^q - C(n,2,\frac{n}{2k})\|\Delta^{k-1}_g u\|_{\frac n{2k},q}^q\notag\\
&\geq (2(k-1)(n-2k) \si_n^{\frac 2n})^q \|\Delta^{k-1}_g u\|_{\frac n{2(k-1)},q}^q.
\end{align*}
Let us define the function $w$ by $w = 2(k-1)(n-2k) \si_n^{\frac 2n} u$. Then we have $\|\Delta^{k-1}_g w\|_{\frac n{2(k-1)},q}^ q \leq 1$. Applying the Adams type inequality \eqref{eq:AdamsLorentz}, we obtain
\begin{equation}\label{eq:aa0}
\int_{\H^n} \Phi_{n,2(k-1),q}(\beta_{n,2k}^{q'} |u|^{q'}) dV_g = \int_{\H^n} \Phi_{n,2(k-1),q}(\beta_{n,2(k-1)}^{q'} |w|^{q'}) dV_g\leq C,
\end{equation}
here we use
\[
\beta_{n,2k} = 2(k-1)(n-2k) \si_n^{\frac 2n} \beta_{n,2(k-1)}.
\]
It follows from \eqref{eq:aa0} and the fact $\Phi_{n,2(k-1),q}(t) \geq C t^{j_{\frac{n}{2(k-1)},q}-1}$ that 
\[
\int_0^\infty (u^*(t))^{q'(j_{\frac{n}{2(k-1)},q}-1)} dt = \int_{\H^n} |u|^{q'(j_{\frac{n}{2(k-1)},q}-1)} dV_g \leq C.
\]
Using the non-increasing of $u^*$, we can easily verify that 
\[
u^*(t) \leq C t^{-1/(q'(j_{\frac{n}{2(k-1)},q}-1))}
\]
for any $t \geqs 0$. Let $x_0 \in \B^n$ such that $V_g(B(0,d(0,x_0))) = 1$. Since the function $h(x) = (1-|x|^2)^n$ is decreasing with respect to $d(0,|x|)$, then $h^\sharp = h$. Using Hardy--Littlewood inequality, we have
\begin{align}\label{eq:aa1}
\int_{\B^n} e^{\beta_{n,2k}^{q'} |u|^{q'}} dx = 2^{-n} \int_{\H^n} e^{\beta_{n,2k}^{q'} |u|^{q'}} h(x) dV_g& \leq 2^{-n} \int_{\H^n} e^{\beta_{n,2k}^{q'} |u^\sharp|^{q'}} h(x) dV_g\notag\\
&= 2^{-n} \int_0^\infty e^{\beta_{n,2k}^{q'} |u^*(t)|^{q'}} h(t) dt.
\end{align}
For $t \geq 1$ we have $u^*(t) \leq C$, hence it holds
\begin{equation}\label{eq:aa2}
2^{-n}\int_1^\infty e^{\beta_{n,2k}^{q'} |u^*(t)|^{q'}} h(t) dt \leq C2^{-n} \int_1^\infty h(t) dt =C \int_{\{|x| \geq |x_0|\}} dx \leq C\si_n.
\end{equation}
Notice that
\[
e^t = \Phi_{\frac n{2(k-1)},q}(t) + \sum_{j=0}^{j_{\frac{n}{2(k-1)},q} -2} \frac{t^j}{j!}.
\]
Using Young's inequality, we get
\[
e^t = \Phi_{\frac n{2(k-1)},q}(t) + C(1+ t^{j_{\frac{n}{2(k-1)},q} -2}).
\]
Consequently, by using the previous inequality and the inequality \eqref{eq:aa0} and the fact $h\leq 1$, we obtain
\begin{align}\label{eq:aa3}
\int_0^1 e^{\beta_{n,2k}^{q'} |u^*(t)|^{q'}} h(t) dt&\leq \int_0^1 \Phi_{\frac n{2(k-1)},q}(\beta_{n,2k}^{q'} |u^*(t)|^{q'}) dt + C \int_0^1\lt(1 + (u^*(t))^{q'(j_{\frac{n}{2(k-1)},q} -2)}\rt) dt\notag\\
&\leq \int_0^\infty \Phi_{\frac n{2(k-1)},q}(\beta_{n,2k}^{q'} |u^*(t)|^{q'}) dt + C + C\int_0^1(u^*(t))^{q'(j_{\frac{n}{2(k-1)},q} -2)} dt\notag\\
&\leq \int_{\H^n} \Phi_{n,2(k-1),q}(\beta_{n,2k}^{q'} |u|^{q'}) dV_g + C + C \int_0^1 t^{-\frac{j_{\frac{n}{2(k-1)},q} -2}{j_{\frac{n}{2(k-1)},q} -1}} dt\notag\\
&\leq C.
\end{align}
Combining \eqref{eq:aa1}, \eqref{eq:aa2} and \eqref{eq:aa3} we obtain the desired estimate.\\

\emph{Case 2: $m=2k+1$, $k\geq 1$.} Since $\frac{2n}{n-1} \leq q \leq \frac{n}{2k+1}$, then by using the Lorentz--Poincar\'e inequality \eqref{eq:Poincare} and the Poincar\'e--Sobolev inequality under Lorentz--Sobolev norm \eqref{eq:improvedLS1a}, we get
\begin{align*}
1&\geq \|\na_g \Delta_g^k u\|_{\frac n{2k+1},q}^q - C(n,2k+1,\frac n{2k+1})^q \|u\|_{\frac n{2k+1},q}^q \\
&\geq \|\na_g \Delta_g^k u\|_{\frac n{2k+1},q}^q - \lt(\frac{(2k+1)(n-1)}{n}\rt)^q \|\Delta_g^k u\|_{\frac n{2k+1},q}^q\\
&\geq (2k \si_n^{\frac 1n})^q \|\Delta_g^k u\|_{\frac n{2k},q}^q.
\end{align*}
Setting $w = 2k \si_n^{\frac 1n} u$, we have $\|\Delta^k_g w\|_{\frac n{2k}, q}^q \leq 1$. Applying the Adams type inequality \eqref{eq:AdamsLorentz}, we obtain
\begin{equation}\label{eq:bb0}
\int_{\H^n} \Phi_{n,2k,q}(\beta_{n,2k+1}^{q'} |u|^{q'}) dV_g = \int_{\H^n} \Phi_{n,2k,q}(\beta_{n,2k}^{q'} |w|^{q'}) dV_g\leq C,
\end{equation}
here we use
\[
\beta_{n,2k+1} = 2k \si_n^{\frac 1n} \beta_{n,2k}.
\]
Similarly in the \emph{Case 1}, the inequality \eqref{eq:bb0} yields
\[
\int_0^\infty (u^*(t))^{q'(j_{\frac{n}{2k},q}-1)} dt = \int_{\H^n} |u|^{q'(j_{\frac{n}{2k},q}-1)} dV_g \leq C,
\]
which implies
\[
u^*(t) \leq C t^{-\frac1{q'(j_{\frac{n}{2k},q}-1)}},\quad t \geqs 0.
\]
Repeating the last arguments in the proof of \emph{Case 1}, we obtain the result in this case.\\

The proof of Theorem \ref{HARDYADAMS} is then completed.

\end{proof}


\bibliographystyle{abbrv}

\begin{thebibliography}{10}

\bibitem{Adachi00}
S.~Adachi and K.~Tanaka.
\newblock Trudinger type inequalities in {$\bold R^N$} and their best
  exponents.
\newblock {\em Proc. Amer. Math. Soc.}, 128(7):2051--2057, 2000.

\bibitem{Adams}
D.~R. Adams.
\newblock A sharp inequality of {J}. {M}oser for higher order derivatives.
\newblock {\em Ann. of Math. (2)}, 128(2):385--398, 1988.

\bibitem{AdimurthiDruet2004}
Adimurthi and O.~Druet.
\newblock Blow-up analysis in dimension 2 and a sharp form of
  {T}rudinger-{M}oser inequality.
\newblock {\em Comm. Partial Differential Equations}, 29(1-2):295--322, 2004.

\bibitem{AdimurthiSandeep2007}
Adimurthi and K.~Sandeep.
\newblock A singular {M}oser-{T}rudinger embedding and its applications.
\newblock {\em NoDEA Nonlinear Differential Equations Appl.}, 13(5-6):585--603,
  2007.

\bibitem{AdimurthiTintarev2010}
Adimurthi and K.~Tintarev.
\newblock On a version of {T}rudinger-{M}oser inequality with {M}\"{o}bius
  shift invariance.
\newblock {\em Calc. Var. Partial Differential Equations}, 39(1-2):203--212,
  2010.

\bibitem{AdimurthiYang2010}
Adimurthi and Y.~Yang.
\newblock An interpolation of {H}ardy inequality and {T}rundinger-{M}oser
  inequality in {$\Bbb R^N$} and its applications.
\newblock {\em Int. Math. Res. Not. IMRN}, (13):2394--2426, 2010.

\bibitem{Alberico}
A.~Alberico.
\newblock Moser type inequalities for higher-order derivatives in {L}orentz
  spaces.
\newblock {\em Potential Anal.}, 28(4):389--400, 2008.

\bibitem{Alvino1996}
A.~Alvino, V.~Ferone, and G.~Trombetti.
\newblock Moser-type inequalities in {L}orentz spaces.
\newblock {\em Potential Anal.}, 5(3):273--299, 1996.

\bibitem{Balogh}
Z.~M. Balogh, J.~J. Manfredi, and J.~T. Tyson.
\newblock Fundamental solution for the {$Q$}-{L}aplacian and sharp
  {M}oser-{T}rudinger inequality in {C}arnot groups.
\newblock {\em J. Funct. Anal.}, 204(1):35--49, 2003.

\bibitem{Bennett}
C.~Bennett and R.~Sharpley.
\newblock {\em Interpolation of operators}, volume 129 of {\em Pure and Applied
  Mathematics}.
\newblock Academic Press, Inc., Boston, MA, 1988.

\bibitem{Bertrand}
J.~Bertrand and K.~Sandeep.
\newblock Adams inequality on pinched hadamard manifolds.
\newblock {\em preprint, arXiv:1809.00879}, 2019.

\bibitem{Carleson86}
L.~Carleson and S.-Y.~A. Chang.
\newblock On the existence of an extremal function for an inequality of {J}.
  {M}oser.
\newblock {\em Bull. Sci. Math. (2)}, 110(2):113--127, 1986.

\bibitem{CassaniTarsi2009}
D.~Cassani and C.~Tarsi.
\newblock A {M}oser-type inequality in {L}orentz-{S}obolev spaces for unbounded
  domains in {$\Bbb R^N$}.
\newblock {\em Asymptot. Anal.}, 64(1-2):29--51, 2009.

\bibitem{Chen}
L.~Chen, G.~Lu, and M.~Zhu.
\newblock Existence and nonexistence of extremals for critical adams
  inequalities in $\mathbb r^4$ and trudinger--moser inequalities in $\mathbb
  r^2$.
\newblock {\em preprint, arXiv:1812.00413}, 2018.

\bibitem{CohnLu}
W.~S. Cohn and G.~Lu.
\newblock Best constants for {M}oser-{T}rudinger inequalities on the
  {H}eisenberg group.
\newblock {\em Indiana Univ. Math. J.}, 50(4):1567--1591, 2001.

\bibitem{CohnLu1}
W.~S. Cohn and G.~Z. Lu.
\newblock Best constants for {M}oser-{T}rudinger inequalities, fundamental
  solutions and one-parameter representation formulas on groups of {H}eisenberg
  type.
\newblock {\em Acta Math. Sin. (Engl. Ser.)}, 18(2):375--390, 2002.

\bibitem{deOliveira}
J.~F. de~Oliveira and J.~a.~M. do~\'{O}.
\newblock Trudinger-{M}oser type inequalities for weighted {S}obolev spaces
  involving fractional dimensions.
\newblock {\em Proc. Amer. Math. Soc.}, 142(8):2813--2828, 2014.

\bibitem{delaTorre}
A.~DelaTorre and G.~Mancini.
\newblock Improved adams--type inequalities and their extremals in dimension
  $2m$.
\newblock {\em preprint, arXiv:1711.00892}, 2017.

\bibitem{DOO}
J.~a.~M. do~\'{O} and M.~de~Souza.
\newblock A sharp inequality of {T}rudinger-{M}oser type and extremal functions
  in {$H^{1,n}(\Bbb{R}^n)$}.
\newblock {\em J. Differential Equations}, 258(11):4062--4101, 2015.

\bibitem{DongYang}
Y.~Q. Dong and Q.~H. Yang.
\newblock An interpolation of {H}ardy inequality and {M}oser-{T}rudinger
  inequality on {R}iemannian manifolds with negative curvature.
\newblock {\em Acta Math. Sin. (Engl. Ser.)}, 32(7):856--866, 2016.

\bibitem{Flucher92}
M.~Flucher.
\newblock Extremal functions for the {T}rudinger-{M}oser inequality in {$2$}
  dimensions.
\newblock {\em Comment. Math. Helv.}, 67(3):471--497, 1992.

\bibitem{FM1}
L.~Fontana and C.~Morpurgo.
\newblock Sharp exponential integrability for critical {R}iesz potentials and
  fractional {L}aplacians on {$\Bbb R^n$}.
\newblock {\em Nonlinear Anal.}, 167:85--122, 2018.

\bibitem{FM2}
L.~Fontana and C.~Morpurgo.
\newblock Adams inequalities for {R}iesz subcritical potentials.
\newblock {\em Nonlinear Anal.}, 192:111662, 32, 2020.

\bibitem{FontanaMorpurgo2020}
L.~Fontana and C.~Morpurgo.
\newblock Adams inequalities for {R}iesz subcritical potentials.
\newblock {\em Nonlinear Anal.}, 192:111662, 32, 2020.

\bibitem{Yudovic1961}
V.~I. Judovi\v{c}.
\newblock Some estimates connected with integral operators and with solutions
  of elliptic equations.
\newblock {\em Dokl. Akad. Nauk SSSR}, 138:805--808, 1961.

\bibitem{Karmakar}
D.~Karmakar and K.~Sandeep.
\newblock Adams inequality on the hyperbolic space.
\newblock {\em J. Funct. Anal.}, 270(5):1792--1817, 2016.

\bibitem{LamLuAdams}
N.~Lam and G.~Lu.
\newblock Sharp {A}dams type inequalities in {S}obolev spaces
  {$W^{m,\frac{n}{m}} (\Bbb R^n)$} for arbitrary integer {$m$}.
\newblock {\em J. Differential Equations}, 253(4):1143--1171, 2012.

\bibitem{LamLuHei}
N.~Lam and G.~Lu.
\newblock Sharp {M}oser-{T}rudinger inequality on the {H}eisenberg group at the
  critical case and applications.
\newblock {\em Adv. Math.}, 231(6):3259--3287, 2012.

\bibitem{LamLusingular}
N.~Lam and G.~Lu.
\newblock Sharp singular {A}dams inequalities in high order {S}obolev spaces.
\newblock {\em Methods Appl. Anal.}, 19(3):243--266, 2012.

\bibitem{LamLunew}
N.~Lam and G.~Lu.
\newblock A new approach to sharp {M}oser-{T}rudinger and {A}dams type
  inequalities: a rearrangement-free argument.
\newblock {\em J. Differential Equations}, 255(3):298--325, 2013.

\bibitem{LiLuYang}
J.~Li, G.~Lu, and Q.~Yang.
\newblock Fourier analysis and optimal {H}ardy-{A}dams inequalities on
  hyperbolic spaces of any even dimension.
\newblock {\em Adv. Math.}, 333:350--385, 2018.

\bibitem{LiYang}
X.~Li and Y.~Yang.
\newblock Extremal functions for singular {T}rudinger-{M}oser inequalities in
  the entire {E}uclidean space.
\newblock {\em J. Differential Equations}, 264(8):4901--4943, 2018.

\bibitem{LiRuf2008}
Y.~Li and B.~Ruf.
\newblock A sharp {T}rudinger-{M}oser type inequality for unbounded domains in
  {$\Bbb R^n$}.
\newblock {\em Indiana Univ. Math. J.}, 57(1):451--480, 2008.

\bibitem{Lin96}
K.-C. Lin.
\newblock Extremal functions for {M}oser's inequality.
\newblock {\em Trans. Amer. Math. Soc.}, 348(7):2663--2671, 1996.

\bibitem{LuTang2013}
G.~Lu and H.~Tang.
\newblock Best constants for {M}oser-{T}rudinger inequalities on high
  dimensional hyperbolic spaces.
\newblock {\em Adv. Nonlinear Stud.}, 13(4):1035--1052, 2013.

\bibitem{LuTang2016}
G.~Lu and H.~Tang.
\newblock Sharp singular {T}rudinger-{M}oser inequalities in
  {L}orentz-{S}obolev spaces.
\newblock {\em Adv. Nonlinear Stud.}, 16(3):581--601, 2016.

\bibitem{LuYangHA}
G.~Lu and Q.~Yang.
\newblock Sharp {H}ardy-{A}dams inequalities for bi-{L}aplacian on hyperbolic
  space of dimension four.
\newblock {\em Adv. Math.}, 319:567--598, 2017.

\bibitem{LuYangAiM}
G.~Lu and Y.~Yang.
\newblock Adams' inequalities for bi-{L}aplacian and extremal functions in
  dimension four.
\newblock {\em Adv. Math.}, 220(4):1135--1170, 2009.

\bibitem{LuZhu}
G.~Lu and M.~Zhu.
\newblock A sharp {T}rudinger-{M}oser type inequality involving {$L^n$} norm in
  the entire space {$\Bbb{R}^n$}.
\newblock {\em J. Differential Equations}, 267(5):3046--3082, 2019.

\bibitem{Mancini}
G.~Mancini and L.~Martinazzi.
\newblock Extremals for fractional moser--trudinger inequalities in dimension
  $1$ via harmonic extensions and commutator estimates.
\newblock {\em preprint, arXiv:1904.10267}, 2019.

\bibitem{ManciniSandeep2010}
G.~Mancini and K.~Sandeep.
\newblock Moser-{T}rudinger inequality on conformal discs.
\newblock {\em Commun. Contemp. Math.}, 12(6):1055--1068, 2010.

\bibitem{ManciniSandeepTintarev2013}
G.~Mancini, K.~Sandeep, and C.~Tintarev.
\newblock Trudinger-{M}oser inequality in the hyperbolic space {${\Bbb H}^N$}.
\newblock {\em Adv. Nonlinear Anal.}, 2(3):309--324, 2013.

\bibitem{Martinazzi}
L.~Martinazzi.
\newblock Fractional {A}dams-{M}oser-{T}rudinger type inequalities.
\newblock {\em Nonlinear Anal.}, 127:263--278, 2015.

\bibitem{Moser70}
J.~Moser.
\newblock A sharp form of an inequality by {N}. {T}rudinger.
\newblock {\em Indiana Univ. Math. J.}, 20:1077--1092, 1970/71.

\bibitem{NgoNguyenRMI}
Q.~A. Ng\^{o} and V.~H. Nguyen.
\newblock Sharp adams--moser--trudinger type inequalities in the hyperbolic
  space.
\newblock {\em to appear in Revista Matem\'atica Iberoamericana}, 2016.

\bibitem{NgoNguyenAMV}
Q.~A. Ng\^{o} and V.~H. Nguyen.
\newblock Sharp constant for {P}oincar\'{e}-type inequalities in the hyperbolic
  space.
\newblock {\em Acta Math. Vietnam.}, 44(3):781--795, 2019.

\bibitem{Nguyen4}
V.~H. Nguyen.
\newblock A sharp adams inequality in dimension four and its extremal
  functions.
\newblock {\em preprint, arXiv:1701.08249}, 2017.

\bibitem{NguyenMT2018}
V.~H. Nguyen.
\newblock Improved {M}oser-{T}rudinger type inequalities in the hyperbolic
  space {$\Bbb{H}^n$}.
\newblock {\em Nonlinear Anal.}, 168:67--80, 2018.

\bibitem{Nguyenimproved}
V.~H. Nguyen.
\newblock Improved singular {M}oser-{T}rudinger and their extremal functions.
\newblock {\em Potential Analysis, to appear.}, 2018.

\bibitem{NguyenPS2018}
V.~H. Nguyen.
\newblock The sharp {P}oincar\'{e}-{S}obolev type inequalities in the
  hyperbolic spaces {$\Bbb{H}^n$}.
\newblock {\em J. Math. Anal. Appl.}, 462(2):1570--1584, 2018.

\bibitem{NguyenCCM}
V.~H. Nguyen.
\newblock Extremal functions for the {M}oser-{T}rudinger inequality of
  {A}dimurthi-{D}ruet type in {$W^{1,N}(\Bbb R^N)$}.
\newblock {\em Commun. Contemp. Math.}, 21(4):1850023, 37, 2019.

\bibitem{NguyenHMT}
V.~H. Nguyen.
\newblock The sharp hardy-moser-trudinger inequality in dimension $n$.
\newblock {\em preprint, arXiv:1909.12587}, 2019.

\bibitem{Nguyen2020a}
V.~H. Nguyen.
\newblock The sharp {S}obolev type inequalities in the {L}orentz--{S}obolev
  spaces in the hyperbolic spaces.
\newblock {\em preprint}, 2019.

\bibitem{Nguyen2020b}
V.~H. Nguyen.
\newblock The sharp higher order {L}orentz-{P}oincar\'e and {L}orentz-{S}obolev
  inequalities in the hyperbolic spaces.
\newblock {\em preprint}, 2020.

\bibitem{NguyenLorentz}
V.~H. Nguyen.
\newblock Singular adams inequalities in {L}orentz--{S}obolev spaces.
\newblock {\em in preparation}, 2020.

\bibitem{Pohozaev1965}
S.~I. Poho\v{z}aev.
\newblock On the eigenfunctions of the equation {$\Delta u+\lambda f(u)=0$}.
\newblock {\em Dokl. Akad. Nauk SSSR}, 165:36--39, 1965.

\bibitem{Ruf2005}
B.~Ruf.
\newblock A sharp {T}rudinger-{M}oser type inequality for unbounded domains in
  {$\Bbb R^2$}.
\newblock {\em J. Funct. Anal.}, 219(2):340--367, 2005.

\bibitem{RufSani}
B.~Ruf and F.~Sani.
\newblock Sharp {A}dams-type inequalities in {$\Bbb{R}^n$}.
\newblock {\em Trans. Amer. Math. Soc.}, 365(2):645--670, 2013.

\bibitem{Tintarev2014}
C.~Tintarev.
\newblock Trudinger-{M}oser inequality with remainder terms.
\newblock {\em J. Funct. Anal.}, 266(1):55--66, 2014.

\bibitem{Trudinger67}
N.~S. Trudinger.
\newblock On imbeddings into {O}rlicz spaces and some applications.
\newblock {\em J. Math. Mech.}, 17:473--483, 1967.

\bibitem{WangYe2012}
G.~Wang and D.~Ye.
\newblock A {H}ardy-{M}oser-{T}rudinger inequality.
\newblock {\em Adv. Math.}, 230(1):294--320, 2012.

\bibitem{YangLi2019}
Q.~Yang and Y.~Li.
\newblock Trudinger-{M}oser inequalities on hyperbolic spaces under {L}orentz
  norms.
\newblock {\em J. Math. Anal. Appl.}, 472(1):1236--1252, 2019.

\bibitem{YangSuKong}
Q.~Yang, D.~Su, and Y.~Kong.
\newblock Sharp {M}oser-{T}rudinger inequalities on {R}iemannian manifolds with
  negative curvature.
\newblock {\em Ann. Mat. Pura Appl. (4)}, 195(2):459--471, 2016.

\bibitem{Yangjfa}
Y.~Yang.
\newblock A sharp form of {M}oser-{T}rudinger inequality in high dimension.
\newblock {\em J. Funct. Anal.}, 239(1):100--126, 2006.

\end{thebibliography}

\end{document}